\documentclass[12pt,a4paper,reqno]{amsproc}
\usepackage{amsmath, amsthm, amscd, amsfonts, amssymb, graphicx, color}
\usepackage{subcaption}
\usepackage[bookmarksnumbered, colorlinks, plainpages]{hyperref}
\hypersetup{colorlinks=true,linkcolor=red, anchorcolor=green, citecolor=cyan, urlcolor=red, filecolor=magenta, pdftoolbar=true}

\textheight 22.5truecm \textwidth 14.5truecm
\setlength{\oddsidemargin}{0.35in}\setlength{\evensidemargin}{0.35in}

\setlength{\topmargin}{-.5cm}

\newtheorem{theorem}{Theorem}[section]
\newtheorem{lemma}[theorem]{Lemma}

\theoremstyle{definition}
\newtheorem{definition}[theorem]{Definition}

\theoremstyle{remark}
\newtheorem{remark}[theorem]{Remark}
\newtheorem{note}[theorem]{Note}

\numberwithin{equation}{section}

\begin{document}

\setcounter{page}{1}

\title[Uncertainty principles associated with LCDT]
 {Uncertainty principles associated with the  linear canonical Dunkl transform}
 
\author[Umamaheswari S and Sandeep Kumar Verma]{Umamaheswari S and Sandeep Kumar Verma}

\address{Department of Mathematics, SRM University-AP, Andhra Pradesh, Guntur--522502, India}

\email{umasmaheswari98@gmail.com, sandeep16.iitism@gmail.com}


\subjclass[2020]{44A20, 81S07, 47A30}

\keywords{Dunkl Transform, Linear Canonical Dunkl Transform, Uncertainty Principles}
\maketitle
\begin{abstract} 
In this paper, we establish analogs of Miyachi, Cowling-Price, and Heisenberg-Pauli-Weyl uncertainty principles in the framework of the linear canonical Dunkl transform. We also obtain some weighted inequalities, such as Nash, Clarkson, Donoho-Stark, and Matolcsi-Szuc’s type inequalities.
\end{abstract}
\section{Introduction}
The classical Heisenberg-Weyl uncertainty principle provides insights into the relationship between a function and its Fourier transform. It states that attempting to confine the behavior of one leads to a loss of control over the other. A key implication of Heisenberg-Weyl uncertainty principle in physics, particularly in optics, is that it sets a lower bound for the product of a signal's spread and its bandwidth. For instance, it means that the product of the effective widths of light intensity in both the space and frequency domains has a minimum value, which is achieved when the light is completely coherent and Gaussian. The Heisenberg-Weyl uncertainty relation is frequently used in signal processing to examine localization in time-frequency analysis \cite{almeida1994fractional}. The classical formulation of the uncertainty principle \cite{folland1997uncertainty} in the form of the lower bound of the product of the dispersions of a function $f$ and its Fourier transform $\hat{f}$ is as follows
\begin{eqnarray}  \label{e:1.1} \||x|f\|_{L^2(\mathbb{R}^n)}\||\omega|\hat{f}\|_{L^2(\mathbb{R}^n)}\geq C\|f\|^2_{L^2(\mathbb{R}^n)}.
\end{eqnarray}
There are many variations of the uncertainty principle. Broadly uncertainty principles can be categorized into two main types: quantitative and qualitative. 
Quantitative uncertainty principles \cite{mejjaoli2022new, Sahbani} are precise inequalities that provide valuable insights into the relationship between a function and its Fourier transform. These inequalities, reminiscent of the classical Heisenberg uncertainty principle, detail how the temporal concentration of a function corresponds to its spectral dispersion. This includes Heisenberg-Pauli-Weyl, Nash-type \cite{carlen1993sharp}, Clarkson-type \cite{Carlson}, Donoho-Stark \cite{donoho1989uncertainty}, and Matlocsi-Szucs-type uncertainty principles \cite{matolcsi1973intersection}, etc.
\par On the other hand, the qualitative uncertainty principles \cite{folland1997uncertainty}, exemplified by works such as those by Hardy \cite{hardy1933theorem, sitaram1997}, Cowling-Price \cite{cowling1983generalisations}, and Miyachi \cite{miyachi1997generalization}, are expressed as theorems rather than inequalities. They describe the behavior of a function and its Fourier transform under specific conditions.
\par The various uncertainty principles in several integral transform domains, such as the LCT transform \cite{guanlei2010uncertainty}, fractional Fourier transform \cite{shinde2001uncertainty}, the Hankel transform \cite{rosler1999uncertaintyy},  Dunkl transform \cite{ghobber2013uncertainty, rosler1999uncertainty, soltani2013heisenberg, soltani2013general, soltani2014p, soltani2017uncertainty}, Heckman-Opdam transform \cite{johansen2016uncertainty}, linear canonical deformed Hankel transform \cite{mejjaoli2023linear}, etc., have been established in great detail.

\par { The main goal of this article is to establish various quantitative and qualitative uncertainty principles within the framework of the linear canonical Dunkl transform on the real line. }
{The classical linear canonical transform (LCT) (or linear canonical Fourier transform)  was introduced independently by Collins \cite{Collins} in the context of paraxial optics and by Moshinsky \cite{Moshinsky} in quantum mechanics to understand the conservation of information and uncertainty under linear maps of phase space.
The linear canonical transform (LCT) encompasses numerous well-known transforms in optics, including the Fourier transform \cite{almeida1993introduction}, fractional Fourier transform \cite{alieva1994fractional, alieva1999mode, namias1980fractional}, and Fresnel transform \cite{james1996generalized}, etc. The LCT offers greater flexibility compared to other transforms due to its additional degrees of freedom. This characteristic has sparked significant interest in its application to address challenges encountered in optics, quantum physics, and signal processing domains \cite{ballentine1970statistical, cramer1986transactional, eskov2014uncertainty, faris1978inequalities, hilgevoord1996uncertainty, uffink1985uncertainty}.
In addition to its wide-ranging applications, the theoretical development of the linear canonical transform (LCT) has been extensively studied and investigated, including convolution structure, sampling theorems, uncertainty principles, and other theoretical aspects. Further, the linear canonical transform associated with the various integral transforms have been defined and studied, for instance, the linear canonical Hankel transform \cite{prasad, srivastava2021framework}, the linear canonical deformed Hankel transform \cite{mejjaoli2023linear}, the linear canonical Fourier Bessel transform \cite{dhaouadi2021harmonic}, etc. }

To proceed further, let us recall the definition of the linear canonical Dunkl transform introduced by Ghazouani et al. \cite{ghazouani2017unified} in connection with the Dunkl transform.  
\begin{definition} \label{D:1}
Let $f$ be an integrable function on $\mathbb{R}$ and $M=(a,b;c,d)\in SL(2,\mathbb{R})$.  Then the  linear canonical Dunkl transform is defined as \cite{ghazouani2017unified} 
\begin{equation*} 
 D_k^M(f)(\lambda) = \left \{\begin{array}{ll} \frac{1}{(ib)^{k +1}}  \int_{\mathbb{R}}  f(x)\, E_k^M(\lambda,x)\,d\mu_k(x), \quad & b \neq 0, 
 \\
 \frac{e^{i\frac{c}{2a}\lambda^2}}{|a|^{k+1}}\,f(\lambda/a) & b=0
 \end{array}\right.
\end{equation*}
where the kernel $E^M_{k}(\lambda,x)$ is defined by 
\begin{equation*}
E^M_{k}(\lambda,x) = e^{\frac{i}{2}(\frac{d}{b}\lambda^2+\frac{a}{b}x^2)}E_{k}(-i\lambda/b,x). 
\end{equation*}
\end{definition}
It is easy to verify that the linear canonical Dunkl transform (LCDT) with parameters $(\cos(\theta), -\sin(\theta); \sin(\theta), \cos(\theta))$ reduces to the fractional Dunkl transform \cite{gh}. In the specific case of $\theta=\pi/2$, it becomes the Dunkl transform, and when $\theta=\pi/2$ and $k=0$, it becomes Fourier transform.

We will precisely establish the following uncertainty principles associated with the linear canonical Dunkl transform:

\begin{itemize}
\item \textbf{Heisenberg-Pauli-Weyl uncertainty principle:} Let $f\in L^p_k(\mathbb{R})$,\, $1<p\le2$, $0<\alpha<\frac{2(k+1)}{q}$, and $\beta>0$. Then there exists a constant $C>0$ such that 
\begin{equation*}
    \|D_k^M(f)\|_{L^{q}_k(\mathbb{R})} \le C\, \||y|^\alpha\,f\|^{\frac{\beta}{\alpha+\beta}}_{L_k^p(\mathbb{R})}\, \||\lambda|^\beta\, D_k^M(f)\|^{\frac{\alpha}{\alpha+\beta}}_{L_k^{q}(\mathbb{R})}. 
\end{equation*}
\item \textbf{Nash-type inequality:} Let $1<p_1<p_2\le2$ with $q_1$ and $q_2$ being the conjugate exponents of $p_1$ and $p_2$, respectively. Suppose $f$ belongs to both $ L_k^{p_1}(\mathbb{R})$ and $ L_k^{p_2}(\mathbb{R})$. Then there exists a constant $C(q_1,q_2,k,b,s)$ such that

\begin{eqnarray*}
\|D_k^M(f)\|_{L_k^{q_2}(\mathbb{R})} &\le&  C(q_1,q_2,k,b)\, \||x|^s\, D_k^M(f)\|_{L_k^{q_2}(\mathbb{R})}^{\frac{(2k+2)\,(q_1-q_2)}{(2k+2)\,(q_1-q_2)+sq_1q_2}}\\ &&\times \, \|f\|^{\frac{sq_1q_2}{(2k+2)\,(q_1-q_2)+sq_1q_2}}_{L_k^{p_1}(\mathbb{R})}, \quad\text{for some }\,\, s>0,
\end{eqnarray*}
 and
\begin{equation*}
 C(q_1,q_2,k,b) = \left\{ 1+\left(\frac{1}{2^{k+1}\,\Gamma(k+2) }\right)^{\frac{q_1-q_2}{q_1}}\, C_{k,b}^{(1-\frac{2}{q_1})q_2}\right\} ^{\frac{1}{q_{2}}}.  
\end{equation*}
\item {\textbf{Clarkson-type inequality:}} If $f$ belongs to $ L_k^{p_1}(\mathbb{R})$ and $ L_k^{p_2}(\mathbb{R}) $, where $1<p_1<p_2\le 2$, then there exists a constant $C(p_1,p_2,k)$ such that 
\begin{equation*}
    \|f\|_{L_k^{p_1}(\mathbb{R})} \le C(p_1,p_2,k)\, \||y|^s\,f\|^{\frac{(2k+2)\,(p_2-p_1)}{(2k+2)(p_2-p_1)+p_1p_2s}}_{L_k^{p_1}(\mathbb{R})}\, \|f\|_{L_k^{p_2}(\mathbb{R})}^{\frac{p_1p_2s}{(2k+2)(p_2-p_1)+p_1p_2s}},
\end{equation*}
where
\begin{equation*}
  C(p_1,p_2,k) = \left\{ 1+\frac{1}{(2^{k+1}\,\Gamma(k+2))^{\frac{p_2-p_1}{p_2}}}\right\}^{\frac{1}{p_1}}.  
\end{equation*}
\item {\textbf{Donoho-Stark uncertainty principle:}}  Let $E$ and $F$ be measurable subsets of $\mathbb{R}$, and let $ f\in L_k^{p_1}(\mathbb{R})\cap L^{p_2}_k(\mathbb{R})$ for $1<p_1<p_2\le 2$. If $f$ is $\epsilon_E$-concentrated in $L_k^{p_1}(\mathbb{R})$ and $D_k^M(f)$ is $\epsilon_{F}$ concentrated in $L_k^{q_2}(\mathbb{R})$, then 
  \begin{equation*}
      \|D_k^M(f)\|_{L_k^{q_2}(\mathbb{R})} \le C_{k,b}^{1-\frac{2}{q_1}}\, \frac{(\gamma_k(E))^{\frac{p_2-p_1}{p_1\,p_2}}\, (\gamma_k(F))^{\frac{q_1-q_2}{q_1\,q_2}}}{(1-\epsilon_E)\,(1-\epsilon_{F})}\, \|f\|_{L_k^{p_2}(\mathbb{R})}.
  \end{equation*}
  \item {\textbf{Matolcsi-Szucs type uncertainty principle:}} If the function  $f$ belongs to $ L_k^{p_1}(\mathbb{R})$ and $L_k^{p_2}(\mathbb{R})$,  where $1<p_1\le p_2 \le 2$, then 
\begin{equation*}
     \|D_k^M(f)\|_{L_k^{q_2}(\mathbb{R})} \le C_{k,b}^{1-\frac{2}{q_1}}\, (\gamma_k(A_{D_k^M(f)}))^{\frac{q_1-q_2}{q_1\,q_2}}\, (\gamma_k(A_f))^{\frac{p_1-p_2}{p_1\,p_2}}\, \|f\|_{L^{p_2}_k(\mathbb{R})},
\end{equation*}
where $$A_{D_k^M(f)}= \left\{x\in \mathbb{R}: D_k^M(f)(x) \neq 0\right\}$$ and $$A_f = \left\{ t
\in \mathbb{R}:f(t)\neq0\right\}.$$

\item \textbf{Miyachi's uncertainty principle:} Let $f$ be a measurable function on $\mathbb{R}$ such that $e^{sx^2}f\in L^p_k(\mathbb{R})+L^{q}_k(\mathbb{R}),$ where $  p,q\in [1, \infty]$. Suppose that
\begin{equation*}
\int_{\mathbb{R}} ln^+\left( \frac{|e^{tx^2} D_k^M(f)(x)|}{\lambda}\right) dx< \infty,    
\end{equation*}
for some positive real constants $s,t,\lambda$, and 
\begin{equation*}
ln^+(r)=\left \{\begin{array}{ll}
ln(r)& 
\text{if}\,\, r>1\\
0, & \text{otherwise}.
\end{array}\right. 
\end{equation*} 
Then
\begin{itemize}
    \item [$(i)$]  $f=0$  a.e on $\mathbb{R}$, if $st>\frac{1}{4b^2}$.
    \item[$(ii)$] $f(x) =  C\, e^{-\left(\frac{i}{2}\frac{a}{b}+s\right)x^2}$, if $st= \frac{1}{4b^2}$ and $|C|\le |\lambda|$.
    \item[$(iii)$] There are many functions satisfying the hypothesis, if $st<\frac{1}{4b^2}$.
\end{itemize}
\vspace{5pt}
\item {\textbf{Cowling-Price uncertainty principle:}} Let $f$ be a measurable function on $\mathbb{R}$  such that
\begin{equation*}
\int_{\mathbb{R}} \frac{e^{spx^2}\,|f(x)|^p}{(1+|x|)^n}\, d\mu_k(x) <\infty 
\end{equation*}
and 
\begin{equation*} 
\int_{\mathbb{R}} \frac{e^{tq\lambda^2} |D_k^M(f)(\lambda)|^q}{(1+|\lambda|)^m}\, d\lambda <\infty,
\end{equation*}
where $s,t>0$ and $n>0, m>1, 1\le p,q <\infty$. Then the following results hold.
\begin{itemize}
    \item [$(i)$] If $st>\frac{1}{4b^2}$, then $f=0$ a.e on $\mathbb{R}$.
    \item[$(ii)$] If $st= \frac{1}{4b^2}$, then  $f(x) = Q(x)\, e^{-(s+\frac{i}{2}\frac{a}{b})x^2}$, where $Q$ is a polynomial with $deg(Q) \le min \{\frac{n}{p}+ \frac{2k}{p'}, \frac{m-1}{q}\}$ and $p'$ is the conjugate exponent of $p$. Moreover, if  $1<m \le 1+q$ and $n>2k+1$, then $f(x)= C\,  e^{-(s+\frac{i}{2}\frac{a}{b})x^2}$. 
\item[$(iii)$] If $st<\frac{1}{4b^2}$, then there are many functions of the form $f(x) = Q(x)\,e^{-(\delta
+\frac{i}{2}\frac{a}{b})x^2}$ which satisfy the hypothesis, where $\delta \in (t, \frac{1}{4sb^2})$ and $Q$ is the polynomial in $\mathbb{R}$. 
\end{itemize}
\end{itemize}

This paper is organized as follows:
Section \ref{S:2} contains the fundamental properties of the linear canonical Dunkl transform. Section \ref{s:5} focuses on deriving the Heisenberg-Pauli-Weyl uncertainty principle and various Heisenberg-type inequalities, such as Nash-type and Clarkson-type inequalities. Further, we discuss the Donoho-Stark and Matolcsi-Szuc’s uncertainty principles. Section \ref{S:3} establishes the Miyachi and Cowling-Price uncertainty principle for the linear canonical Dunkl transform. 
\begin{section}{Preliminaries} \label{S:2}
We begin this section by introducing some notations. We denote $\mathcal{C}_0 (\mathbb{R})$ the space of continuous functions on $\mathbb{R}$ which vanishes at infinity and  $ M = \begin{bmatrix} a & b\\ c & d\\ \end{bmatrix}$ be an arbitrary matrix in  the special linear group $SL(2,\mathbb{R})$. For the notational convention, we will write $M=(a,b;c,d)$.
Throughout this article, we will consider $k\ge - \frac{1}{2}$. For $1\leq p\leq\infty$, we denote by $ L_k^p(\mathbb{R})$ the Banach   space consisting of measurable functions $f$ on $\mathbb{R}$ equipped with the norm 
   \begin{eqnarray*}
    \|f\|_{L_k^p(\mathbb{R})} &=&\left( \int_{\mathbb{R}} |f(x)|^p\,  d\mu_k(x)\right)^{\frac{1}{p}},~ \text{if}~ 1\le p<\infty, 
\end{eqnarray*}
and 
\begin{eqnarray*}
   \|f\|_{L_k^{\infty}(\mathbb{R})} &=& \mathop{\underset{{x\in \mathbb{R}}}{\text{ess.sup}}}|f(x)|, \quad p= \infty, 
\end{eqnarray*}
where $d\mu_k(x) = (|x|^{2k+1}/2^{k+1}\Gamma(k+1))dx.$ 
 
\begin{subsection}{ Properties of Linear 
canonical Dunkl transform}
In this subsection, we recall some properties of the linear canonical Dunkl transform from \cite{ghazouani2017unified} that will be useful for subsequent investigations.

\begin{itemize}
\item[$(i)$]  Inversion formula: For every $ f\in L^1_{k}(\mathbb{R})$ such that $D^M_{k}f \in L^1_{k}(\mathbb{R})$,  we have
\begin{equation*}\label{e:2.2}
D^{M^{-1}}_{k} (D^M_{k}f)
=f ,\,\, \text{a.e},    
\end{equation*}
where $M^{-1}$ is the inverse of matrix $M$.
\item[$(ii)$] Plancherel's formula:
If $f \in  L^{1}_{k}(\mathbb{R})\cap L^{2}_{k}(\mathbb{R}) $, then  $D^M_k(f) \in L^{2}_{k}(\mathbb{R})$ and we have 
 \begin{equation} \label{e:2.3}
 \|D^M_k(f)\|_{ L^{2}_{k}(\mathbb{R})} = \| f \|_{ L^{2}_{k}(\mathbb{R})}.
 \end{equation}
\end{itemize}
\begin{remark} 
The linear canonical Dunkl transform and the Dunkl transform are related as follows
\begin{eqnarray} \label{eq:4.2}
 D_k^M(f)(\lambda) 
 &=& \frac{e^{\frac{i}{2}\frac{d}{b}\lambda^2}}{(ib)^{k+1}}\, D_k(\tilde{f})\left(\frac{\lambda}{b}\right),~b\neq 0,
\end{eqnarray}
where $\tilde{f}(x) =  e^{\frac{i}{2}\frac{a}{b}x^2}f(x)$ and $D_k(f)$ is the Dunkl transform \cite{de1993dunkl, rosler1998generalized} defined by 
\begin{equation*}
  D_k(f)(\lambda) = \int_{\mathbb{R}} f(x)\, E_k(-i\lambda,x)\,d\mu_k(x),
\end{equation*}
where  $ E_k(-i\lambda,x)$ is the Dunkl kernel \cite{rosler2003positive, rosler2003dunkl}
defined by
 \begin{equation*}
  E_k(-i\lambda,x) = j_{k}(-\lambda x)+\frac{(-i\lambda) x}{2(k+1)}j_{k+1}(-\lambda x), \quad k\ge -1/2,\end{equation*}
and $j_{k}$  denotes the normalized spherical Bessel function 
 \begin{equation*}
 j_{k}(x) = 2^{k} \Gamma(k+1)x^{-k}J_{k}(x) = \Gamma(k +1)\sum^{\infty}_{n=0}\frac{(-1)^n(x/2)^{2n}}{n!\Gamma(n+k+1)}.
\end{equation*}
\end{remark} 
For $\lambda,x\in\mathbb{R}$, the Dunkl kernel $E_k(-i\lambda, x)$ is the unique solution of the system
\begin{eqnarray*}
    \Lambda_k f(x)=-i\lambda f(x) ~\text{and} ~f(0)=1,
\end{eqnarray*}
where the Dunkl operator $\Lambda_k$ associated with the reflection group $\mathbb{Z}_2$ on $\mathbb{R}$ is defined as
\begin{eqnarray*}
    \Lambda_k(f)(x)=\frac{d}{dx}f(x)+\frac{2k+1}{2}\left(\frac{f(x)-f(-x)}{x}\right), ~k\geq -1/2.
\end{eqnarray*}
Note that $\Lambda_{-1/2}=d/dx$.
\\
Let us collect some properties of $E_k(-i\lambda,x)$ from \cite{rosler2003positive,  trimeche2001dunkel}:

\subsection{Properties:}\label{pr:2.2}
\begin{itemize}
\item[$(i)$] The kernel $E_k(-i\lambda, x)$ extends as an analytic function of $\mathbb{C}\times \mathbb{C}.$ 
\item[$(ii$)] For all $x, \lambda \in \mathbb{R}$, we have $|E_k(-i\lambda,x)|\le 1$.
\item[$(iii)$] For all $x\in \mathbb{R}$ and $z\in \mathbb{C}$, we have $\left|E_k\left(-iz,x\right)\right| \le e^{|\mathfrak{Im}z||x|}$.
\item[$(iv)$] For $s>0$, we have $$\int_\mathbb{R} e^{-s\xi^2}\, E_k(-ix, \xi)\, E_k(iy, \xi) d\mu_k(\xi) = \frac{\Gamma(k+1)}{s^{k+1}}\,e^{-\frac{(x^2+y^2)}{4s}}\,E_k\left( \frac{x}{\sqrt{2s}}, \frac{y}{\sqrt{2s}}\right).$$
\end{itemize}
\begin{note} \label{N:2.4}
   In \cite{kawazoe2010uncertainty}, it is proved that $D_k(f)$ can be holomorphically extended to an entire function in $\mathbb{C}$. As a consequence, the LCDT $D_k^M(f)$ can also be extended holomorphically to $\mathbb{C}$.    
\end{note}
\begin{lemma}[Riemann-Lebesgue lemma] \label{le:3.1} If $f\in L_k^1(\mathbb{R})$, then  $D_k^M(f)$ belongs to $\mathcal{C}_0(\mathbb{R})$ and satisfies the following inequality 
\begin{equation*} \label{e:2.4}
    \|D_k^M(f)\|_{L_k^\infty(\mathbb{R})} \le \frac{1}{|b|^{k+1}}\, \|f\|_{L_k^1(\mathbb{R})}.
\end{equation*}
\end{lemma}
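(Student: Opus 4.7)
The plan is to handle the two assertions separately: first the sup-norm bound, which follows directly from the definition and the pointwise estimate on the Dunkl kernel; then the $\mathcal{C}_0$ property, which I would reduce to the classical Riemann–Lebesgue lemma for the ordinary Dunkl transform via the relation \eqref{eq:4.2}.

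For the inequality, I would start from the integral defining $D_k^M(f)(\lambda)$ in the case $b\neq 0$ and take absolute values inside the integral. Since the outer exponential $e^{\frac{i}{2}(\frac{d}{b}\lambda^2+\frac{a}{b}x^2)}$ is unimodular, the kernel satisfies $|E_k^M(\lambda,x)|=|E_k(-i\lambda/b,x)|\le 1$ by Property \ref{pr:2.2}(ii). This immediately gives
\begin{equation*}
|D_k^M(f)(\lambda)|\le \frac{1}{|b|^{k+1}}\int_{\mathbb{R}}|f(x)|\,d\mu_k(x)=\frac{1}{|b|^{k+1}}\|f\|_{L_k^1(\mathbb{R})},
\end{equation*}
and taking the essential supremum over $\lambda$ yields the stated bound.

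For the $\mathcal{C}_0$ conclusion, I would invoke the identity \eqref{eq:4.2}, which writes $D_k^M(f)(\lambda)$ as a unimodular factor times $\frac{1}{(ib)^{k+1}}D_k(\tilde f)(\lambda/b)$, where $\tilde f(x)=e^{\frac{i}{2}\frac{a}{b}x^2}f(x)$. Since $|\tilde f|=|f|$, $\tilde f\in L_k^1(\mathbb{R})$, and the classical Riemann–Lebesgue lemma for the Dunkl transform (a standard fact for $D_k$ acting on $L_k^1$, available from the cited references on the Dunkl transform) gives $D_k(\tilde f)\in\mathcal{C}_0(\mathbb{R})$. Continuity of $\lambda\mapsto D_k^M(f)(\lambda)$ then follows because it is a product of the continuous function $\lambda\mapsto e^{\frac{i}{2}\frac{d}{b}\lambda^2}$ and the continuous function $\lambda\mapsto D_k(\tilde f)(\lambda/b)$, while vanishing at infinity follows from $|D_k^M(f)(\lambda)|=|b|^{-(k+1)}|D_k(\tilde f)(\lambda/b)|\to 0$ as $|\lambda|\to\infty$ (here we use that $b\neq 0$, so $|\lambda/b|\to\infty$).

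There is no serious obstacle; the only point requiring care is that the unimodular prefactor in \eqref{eq:4.2} does not itself vanish at infinity, so one must observe that decay comes entirely from the $D_k(\tilde f)$ factor. The argument implicitly rests on knowing Riemann–Lebesgue for $D_k$, which in turn is proved in the standard way by density of $C_c(\mathbb{R})$ in $L_k^1(\mathbb{R})$ together with the trivial bound $\|D_k g\|_{L_k^\infty}\le\|g\|_{L_k^1}$; if needed, this density argument can be inserted here as a short self-contained reduction rather than cited.
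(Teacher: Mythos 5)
Your proposal is correct. Note that the paper itself supplies no proof of this lemma --- it is stated as a recalled property of the linear canonical Dunkl transform (following the cited source of Ghazouani et al.) --- so there is no argument in the paper to compare yours against. What you give is the standard and sound route: the norm bound follows from $|(ib)^{k+1}|=|b|^{k+1}$, the unimodularity of the chirp factors, and the kernel estimate $|E_k(-i\lambda/b,x)|\le 1$ of Property \ref{pr:2.2}(ii) (valid since $\lambda/b\in\mathbb{R}$); and the $\mathcal{C}_0$ property correctly reduces via \eqref{eq:4.2} to the Riemann--Lebesgue lemma for the Dunkl transform applied to $\tilde f$, with $\|\tilde f\|_{L_k^1}=\|f\|_{L_k^1}$. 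Your closing observation --- that the decay at infinity must come from the factor $D_k(\tilde f)(\lambda/b)$ because the chirp prefactor does not decay, and that this uses $b\neq 0$ so that $|\lambda/b|\to\infty$ --- is exactly the point one must not gloss over, and you handle it correctly. The only implicit restriction is that the stated inequality only makes sense for $b\neq 0$, which is consistent with your working exclusively in that case.
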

Now, we will establish Young's inequality in the following lemma.
\begin{lemma}[Young's inequality] \label{le:3.2} For $ 1 \le p \le 2$, the operator $D_k^M$ is a bounded linear operator on $L_k^p(\mathbb{R})$, and it satisfies the following inequality:   \begin{equation}\label{eq:3.1}
    \|D_k^M(f)\|_{L_k^{q}(\mathbb{R})} \le \frac{1}{|b|^{(k+1)\left(1-\frac{2}{q}\right)}}\, \|f\|_{L^p_k(\mathbb{R})},
\end{equation}
where $q$ is the conjugate exponent of $p$.
\end{lemma}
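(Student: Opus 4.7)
The plan is to apply the Riesz--Thorin interpolation theorem to the linear operator $D_k^M$, using the two endpoint estimates already recorded earlier in the section. At $(p_0,q_0) = (1,\infty)$, the Riemann--Lebesgue bound of Lemma \ref{le:3.1} gives $\|D_k^M f\|_{L_k^\infty(\mathbb{R})} \le |b|^{-(k+1)} \|f\|_{L_k^1(\mathbb{R})}$. At $(p_1,q_1) = (2,2)$, the Plancherel identity \eqref{e:2.3} gives $\|D_k^M f\|_{L_k^2(\mathbb{R})} = \|f\|_{L_k^2(\mathbb{R})}$. Both bounds make sense on the dense subspace $L_k^1(\mathbb{R}) \cap L_k^2(\mathbb{R})$, where $D_k^M$ is defined directly by its integral.

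For the intermediate range $1 < p < 2$, I would parameterise by $\theta \in (0,1)$ via $1/p = (1-\theta) + \theta/2$ and $1/q = \theta/2$, which yield $\theta = 2/q$ and $1-\theta = 1 - 2/q$, with the Hölder conjugacy $1/p + 1/q = 1$ automatically satisfied. The Riesz--Thorin convexity theorem then produces
\[
\|D_k^M f\|_{L_k^q(\mathbb{R})} \le \bigl(|b|^{-(k+1)}\bigr)^{1-\theta} \cdot 1^{\theta}\, \|f\|_{L_k^p(\mathbb{R})} = \frac{1}{|b|^{(k+1)(1-2/q)}}\, \|f\|_{L_k^p(\mathbb{R})},
\]
matching the claim exactly. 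A standard density argument then extends the estimate from $L_k^1 \cap L_k^2$ to all of $L_k^p(\mathbb{R})$.

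There is no serious obstacle here; the main point is the bookkeeping that the geometric mean of the endpoint constants produces precisely $|b|^{-(k+1)(1-2/q)}$. As an alternative more in the spirit of the explicit formulas in the excerpt, one could instead invoke the identity \eqref{eq:4.2} linking $D_k^M$ to the ordinary Dunkl transform $D_k$: setting $\tilde f(x) = e^{iax^2/(2b)} f(x)$ so that $|\tilde f(x)| = |f(x)|$ pointwise, the dilation $\lambda = b\xi$ in the $L_k^q$-norm integral of $D_k^M f$ produces the Jacobian $|b|^{2k+2}$, which combined with the prefactor $|b|^{-(k+1)q}$ and Young's inequality for the standard Dunkl transform yields the same exponent of $|b|$.
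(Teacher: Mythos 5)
Your proof is correct and follows essentially the same route as the paper: the paper's proof is a one-line invocation of the Riesz--Thorin interpolation theorem between the Plancherel formula \eqref{e:2.3} and the Riemann--Lebesgue Lemma \ref{le:3.1}, and your computation of $\theta = 2/q$ and of the resulting constant $|b|^{-(k+1)(1-2/q)}$ simply fills in the bookkeeping the paper omits. The alternative argument you sketch via the relation \eqref{eq:4.2} to the ordinary Dunkl transform is not used in the paper but is a reasonable consistency check.
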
 
\begin{proof}
By applying the Riesz-Thorin interpolation theorem \cite{stein1956interpolation} between the Plancherel formula \eqref{e:2.3} and the Riemann-Lebesgue Lemma \ref{le:3.1}, we obtain the desired result.
\end{proof}
\end{subsection}
\end{section}

\section{Quantitative Uncertainty Principles}\label{s:5}
The quantitative uncertainty principle provides precise inequalities that relate a function and its transform. It offers exact mathematical bounds and relations. These precise bounds are useful in practical applications, such as signal processing, quantum mechanics, and differential equations. In this section, we will prove the Heisenberg-Pauli-Weyl uncertainty principle, Nash and Clarkson type uncertainty principles, and Donoho-Stark and Matolcsi-Szucs uncertainty principles in the setting of the linear canonical Dunkl transform (LCDT).
\begin{subsection}{Heisenberg's-Pauli-Weyl Uncertainty Principle}
The Heisenberg-Pauli-Weyl uncertainty principle \cite{soltani2013heisenberg}  can be established through various methods such as the entropy-based approach and the spectral method. However, in this study, we focus on proving the Heisenberg-Pauli-Weyl uncertainty principle in the classical approach which generalizes equation \eqref{e:1.1}.
\begin{theorem}\label{t:4.3}
Let $f\in L^p_k(\mathbb{R})$, \,$1<p\le2$, $0<\alpha<\frac{2(k+1)}{q}$ and $\beta>0$. Then there exist $C>0$  such that 
\begin{equation*}
\|D_k^M(f)\|_{L^{q}_k(\mathbb{R})} \le C\, \||y|^\alpha\,f\|^{\frac{\beta}{\alpha+\beta}}_{L_k^p(\mathbb{R})}\, \||\lambda|^\beta\, D_k^M(f)\|^{\frac{\alpha}{\alpha+\beta}}_{L_k^{q}(\mathbb{R})} .
\end{equation*}
\end{theorem}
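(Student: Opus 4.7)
My plan is a standard split-and-optimize argument carried out with two coupled splittings, one on the frequency side to separate out the high-$|\lambda|$ tail and one on the spatial side to bring in the weight $|y|^\alpha$. The intermediate target is the two-term bound
\begin{equation*}
\|D_k^M f\|_{L^q_k(\mathbb{R})}^q \le C_1\, r^{q\alpha}\, \|\,|y|^\alpha f\|_{L^p_k(\mathbb{R})}^q + r^{-q\beta}\, \|\,|\lambda|^\beta D_k^M f\|_{L^q_k(\mathbb{R})}^q,
\end{equation*}
valid for every $r>0$; the standard minimization of a sum of the form $A r^{q\alpha} + B r^{-q\beta}$ over $r>0$ then produces the desired Heisenberg-Pauli-Weyl inequality with the exponents $\beta/(\alpha+\beta)$ and $\alpha/(\alpha+\beta)$.

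For the first splitting I write $\|D_k^M f\|_{L^q_k}^q = \int_{|\lambda|\le r}|D_k^M f|^q d\mu_k + \int_{|\lambda|> r}|D_k^M f|^q d\mu_k$, and the tail term is handled immediately by Chebyshev (multiply and divide the integrand by $|\lambda|^{q\beta}$), yielding the second term of the target bound. The heart of the argument is the low-frequency estimate
\begin{equation*}
\Bigl(\int_{|\lambda|\le r}|D_k^M f(\lambda)|^q\, d\mu_k(\lambda)\Bigr)^{1/q} \le C\, r^\alpha\, \|\,|y|^\alpha f\|_{L^p_k(\mathbb{R})},
\end{equation*}
which requires the second splitting. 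For any $R>0$ I decompose $f = f_1+f_2$ with $f_1 = f\chi_{\{|y|\le R\}}$ and $f_2 = f\chi_{\{|y|>R\}}$. For $f_1$ I combine Hölder's inequality with conjugate exponents $(p,p')$, namely $\|f_1\|_{L^1_k} \le \bigl(\int_{|y|\le R}|y|^{-\alpha p'}\, d\mu_k\bigr)^{1/p'} \|\,|y|^\alpha f\|_{L^p_k}$, with the Riemann-Lebesgue bound of Lemma \ref{le:3.1} and the trivial $\|g\,\chi_{\{|\lambda|\le r\}}\|_{L^q_k} \le (\mu_k(\{|\lambda|\le r\}))^{1/q}\|g\|_{L^\infty_k}$; this yields a contribution of order $r^{(2k+2)/q}\, R^{(2k+2)/q-\alpha}\,\|\,|y|^\alpha f\|_{L^p_k}$. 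For $f_2$ I use Young's inequality (Lemma \ref{le:3.2}) together with the pointwise estimate $\|f_2\|_{L^p_k} \le R^{-\alpha}\|\,|y|^\alpha f\|_{L^p_k}$, giving a contribution $R^{-\alpha}\|\,|y|^\alpha f\|_{L^p_k}$.

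The main obstacle is the coupling of the two splitting parameters $r$ and $R$. The choice $R = 1/r$ works because the same exponent $(2k+2)/q$ appears both in $\mu_k(\{|\lambda|\le r\})^{1/q}$ and in the local Hölder bound for $|y|^{-\alpha}$; the two contributions of the Dunkl dimension $2k+2$ cancel exactly, leaving $r^\alpha$ on each piece, which is precisely what matches the Chebyshev term in the optimization. The upper bound $\alpha < 2(k+1)/q$ enters the argument in precisely one place, as the local integrability condition for $|y|^{-\alpha p'}$ at the origin (using $p' = q$), and is essential: without it the Hölder step on the localized piece breaks down.
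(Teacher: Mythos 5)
Your proof is correct, and although it rests on the same underlying mechanism as the paper's argument (a frequency-side split, a coupled spatial split handled by the Riemann--Lebesgue bound plus H\"older on the localized piece and Young's inequality on the tail, followed by optimization in the free parameter), the implementation differs in two genuine ways. First, you use the sharp cutoff $\chi_{\{|\lambda|\le r\}}$ where the paper uses the Gaussian multiplier $e^{-t\lambda^2}$ (Lemma \ref{l:6.2}); your coupling $R=1/r$ plays exactly the role of the paper's choice $r=t^{1/2}$ there, and the cancellation of the homogeneity exponent $\tfrac{2k+2}{q}$ is identical in both versions, as is the single use of the hypothesis $\alpha<\tfrac{2(k+1)}{q}$ to integrate $|y|^{-\alpha q}$ near the origin. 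Second, and more substantively, your Chebyshev treatment of the high-frequency tail, $\int_{|\lambda|>r}|D_k^M f|^q\,d\mu_k\le r^{-q\beta}\,\||\lambda|^\beta D_k^M f\|^q_{L_k^{q}(\mathbb{R})}$, is valid uniformly for all $\beta>0$. The paper instead bounds $1-e^{-t\lambda^2}$ by $t^{\beta/2}\sup_{\rho\ge0}\rho^{-\beta/2}(1-e^{-\rho})\cdot|\lambda|^{\beta}$, which is finite only for $\beta\le2$, and must then run a separate moment-interpolation step (estimate \eqref{e:5.12}) to lift the conclusion to $\beta>2$; your route eliminates that case distinction entirely, which is a real simplification. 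Two small points you should make explicit in a full write-up: the exact additivity $\|D_k^M f\|^q_{L_k^{q}(\mathbb{R})}=\|\chi_{\{|\lambda|\le r\}}D_k^M f\|^q_{L_k^{q}(\mathbb{R})}+\|\chi_{\{|\lambda|>r\}}D_k^M f\|^q_{L_k^{q}(\mathbb{R})}$ is what makes the $q$-th-power formulation clean, and the optimization of $A r^{q\alpha}+B r^{-q\beta}$ needs the trivial remarks that the claim is vacuous when $A$ or $B$ is infinite and follows by letting $r\to0$ or $r\to\infty$ when one of them vanishes, since the optimal $r$ is otherwise undefined.
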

In order to simplify the proof of Theorem \ref{t:4.3}, we first establish the following technical lemmas:
\begin{lemma}\label{l:5.1}
    Let $t>0$ and  $p\in [1,\infty)$. Then there exists $C>0$ such that
    \begin{eqnarray*}
\|e^{-t\lambda^2}\|_{L^p_k(\mathbb{R})} =C\, t^{-\left(\frac{k+1}{p}\right)}.
    \end{eqnarray*}
\end{lemma}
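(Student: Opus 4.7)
The plan is to directly compute the norm by unwinding the definition and reducing to a standard Gamma integral. First I would write
\begin{equation*}
\|e^{-t\lambda^2}\|_{L^p_k(\mathbb{R})}^p = \int_{\mathbb{R}} e^{-pt\lambda^2}\, d\mu_k(\lambda) = \frac{1}{2^{k+1}\Gamma(k+1)} \int_{\mathbb{R}} e^{-pt\lambda^2}\, |\lambda|^{2k+1}\, d\lambda,
\end{equation*}
and use the evenness of the integrand to fold the integral onto $(0,\infty)$, picking up a factor of $2$.

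Next I would apply the substitution $u = pt\lambda^2$, so that $\lambda^{2k+1}\,d\lambda = \frac{1}{2}(pt)^{-(k+1)}\,u^k\,du$, which converts the integral into $\frac{1}{2}(pt)^{-(k+1)}\int_0^\infty e^{-u} u^k\, du = \frac{\Gamma(k+1)}{2(pt)^{k+1}}$. The condition $k \geq -\tfrac{1}{2}$ ensures the integral converges at the origin, and $t>0$ ensures convergence at infinity. Assembling the pieces gives
\begin{equation*}
\|e^{-t\lambda^2}\|_{L^p_k(\mathbb{R})}^p = \frac{1}{(2p)^{k+1}}\, t^{-(k+1)},
\end{equation*}
and taking $p$-th roots yields the claimed identity with $C = (2p)^{-(k+1)/p}$.

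There is really no main obstacle here: this is a direct Gamma-integral evaluation, and the only thing to be mindful of is the admissible range of $k$ so that the integral near zero is finite, which is already built into the standing assumption $k \geq -\tfrac{1}{2}$.
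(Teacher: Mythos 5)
Your computation is correct and is exactly the direct evaluation that the paper compresses into the single line ``the proof directly follows from the norm estimate''; your substitution $u=pt\lambda^2$ yields $\|e^{-t\lambda^2}\|_{L^p_k(\mathbb{R})}^p=(2p)^{-(k+1)}t^{-(k+1)}$, hence $C=(2p)^{-(k+1)/p}$, and the standing assumption $k\ge-\tfrac12$ indeed guarantees convergence of $\int_0^\infty e^{-u}u^k\,du$. No issues.
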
 
\begin{proof}
    The proof directly follows from the norm estimate.
\end{proof}
\begin{lemma}\label{l:6.2}
    If  $f\in L_k^p(\mathbb{R})$, where $1<p\le2$, and $0<\alpha< \frac{2(k+1)}{q}$, with $q$ being the conjugate exponent of $p$, then there exists  $C(\alpha,b,q)>0$ such that 
    \begin{equation*}
\|e^{-t\lambda ^2}  D_k^M(f)\|_{{L_k^{q}(\mathbb{R})}} \le C(\alpha,b,q)\, t^{-\frac{\alpha}{2}}\, \| |y|^\alpha f\|_{{L^p_k(\mathbb{R})} }, \,\, \text{for some} \,\, t>0.    
    \end{equation*}
\end{lemma}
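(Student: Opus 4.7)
I would prove this by a splitting argument. Write $f = f_1 + f_2$ with $f_1 = f\,\chi_{\{|y|\le R\}}$ and $f_2 = f\,\chi_{\{|y|>R\}}$ for a parameter $R>0$ to be chosen at the end, and bound the two contributions to $\|e^{-t\lambda^2}D_k^M(f)\|_{L_k^q(\mathbb{R})}$ by complementary endpoint estimates for $D_k^M$: the $L^1\!\to\!L^\infty$ Riemann–Lebesgue bound on $f_1$, and the $L^p\!\to\!L^q$ Young bound on $f_2$. The hypothesis $\alpha<2(k+1)/q$ will enter exactly so that a negative power of $|y|$ is integrable near the origin.

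For the small-$y$ piece, first write $\|e^{-t\lambda^2}D_k^M(f_1)\|_{L_k^q(\mathbb{R})}\le \|e^{-t\lambda^2}\|_{L_k^q(\mathbb{R})}\|D_k^M(f_1)\|_{L_k^\infty(\mathbb{R})}$. By Lemma \ref{l:5.1} the first factor equals $C\,t^{-(k+1)/q}$, and by Lemma \ref{le:3.1} the second is at most $|b|^{-(k+1)}\|f_1\|_{L_k^1(\mathbb{R})}$. A Hölder step on the bounded region using the factorization $|f_1(y)|=|y|^{-\alpha}\cdot|y|^\alpha|f(y)|$ then gives
\begin{equation*}
\|f_1\|_{L_k^1(\mathbb{R})}\le\Bigl(\int_{|y|\le R}|y|^{-\alpha q}\,d\mu_k(y)\Bigr)^{1/q}\,\||y|^\alpha f\|_{L_k^p(\mathbb{R})},
\end{equation*}
and the radial integral equals a constant times $R^{2(k+1)-\alpha q}$, finite precisely because $\alpha q<2(k+1)$; raising to the $1/q$-th power yields $R^{2(k+1)/q-\alpha}$.

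For the large-$y$ piece, use $\|e^{-t\lambda^2}\|_{L_k^\infty(\mathbb{R})}=1$, Young's inequality (Lemma \ref{le:3.2}), and the pointwise domination $|f_2(y)|\le R^{-\alpha}|y|^\alpha|f(y)|$ on $\{|y|>R\}$ to obtain
\begin{equation*}
\|e^{-t\lambda^2}D_k^M(f_2)\|_{L_k^q(\mathbb{R})}\le|b|^{-(k+1)(1-2/q)}\,R^{-\alpha}\,\||y|^\alpha f\|_{L_k^p(\mathbb{R})}.
\end{equation*}
Adding the two contributions produces a bound of the form $\bigl(C_1\,t^{-(k+1)/q}\,R^{2(k+1)/q-\alpha}+C_2\,R^{-\alpha}\bigr)\||y|^\alpha f\|_{L_k^p(\mathbb{R})}$.

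Finally, choosing $R=\sqrt{t}$ collapses both powers of $t$ to $t^{-\alpha/2}$ and produces the stated inequality with a constant $C(\alpha,b,q)$ depending only on $\alpha, b, q$ (and the fixed parameter $k$). There is no serious obstacle in this plan; the only delicate point is recognizing that the same hypothesis $\alpha<2(k+1)/q$ must serve two purposes at once — making the negative-power integral over the small ball converge, and balancing the scaling so that the $R=\sqrt{t}$ choice gives matching exponents $-\alpha/2$ on both pieces.
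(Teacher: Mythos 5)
Your proposal is correct and follows essentially the same route as the paper's own proof: the same decomposition of $f$ at radius $R$ into a small-$|y|$ piece handled via Lemma \ref{l:5.1}, the Riemann--Lebesgue bound and a H\"older step with the weight $|y|^{-\alpha}$, and a large-$|y|$ piece handled via Young's inequality and the pointwise domination $|f\chi_{\{|y|>R\}}(y)|\le R^{-\alpha}|y|^{\alpha}|f(y)|$, followed by the choice $R=\sqrt{t}$. The exponent bookkeeping and the role of the hypothesis $\alpha q<2(k+1)$ are exactly as in the paper.
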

\begin{proof}
The result directly holds, if $\||y|^\alpha\,f\|_{L_k^p(\mathbb{R})} =\infty$. Now, let us assume that $\||y|^{\alpha}\, f\|_{L_k^p(\mathbb{R})}<\infty.$
Let $f\in L^p_k(\mathbb{R})$, and suppose $f$ can be written as 
\begin{equation} \label{eq :5.1}
f = f \chi_{(-r,r)}+f \chi_{\mathbb{R}\setminus(-r,r)}, \quad r>0,
\end{equation}
where $\chi_{(-r,r)}$ is the characteristic function on $(-r,r)$.
Also, we can easily check that
\begin{equation}\label{5.1}
 |f \chi_{{\mathbb{R}}\setminus(-r,r)}(y)|\le |y|^\alpha\,r^{-\alpha}\,|f(y)|.   
\end{equation}
By applying the linear canonical Dunkl transform and multiplying both sides of equation \eqref{eq :5.1} by $e^{-t\lambda^2}$, and then applying Minkowski's inequality, we obtain
\begin{equation}\label{e:5.3}
    \|e^{-t\lambda^2}D_k^M(f)\|_{L_k^q(\mathbb{R})} \le \|e^{-t\lambda^2} D_k^M\left(f\chi_{(-r,r)}\right)\|_{L_k^{q}(\mathbb{R})}+\|e^{-t\lambda^2}D_k^M\left(f\chi_{\mathbb{R}\setminus(-r,r)}\right)\|_{L_k^{q}(\mathbb{R})}.
\end{equation}
Now, we consider the first part of equation \eqref{e:5.3}. By successively applying Lemmas \ref{l:5.1}, \ref{le:3.1}, and using H\"older's inequality, we obtain the following:
\begin{eqnarray*}
\|e^{-t\lambda^2} D_k^M\left(f\chi_{(-r,r)}\right) \|_{L_k^{q}(\mathbb{R})} &\le& \|e^{-t\lambda^2}\|_{L_k^{q}(\mathbb{R})}\, \| D_k^M\left(f\chi_{(-r,r)}\right)\|_{L_k^{\infty}(\mathbb{R})}.\\
&\le&  \frac{C\, t^{-\left(\frac{k+1}{q}\right)}}{|b|^{k+1}}\, \|f\chi_{(-r,r)}\|_{L_k^1(\mathbb{R})}\\
     &\le& \frac{C\, t^{-\left(\frac{k+1}{q}\right)}}{|b|^{k+1}}\, \||y|^{-\alpha} \,\chi_{(-r,r)}\|_{L_k^{q}(\mathbb{R})}\, \||y|^\alpha\,f\|_{L_k^{p}(\mathbb{R})}.
 \end{eqnarray*}
For $0<\alpha<\frac{2(k+1)}{q}$, we have
\begin{equation*}\label{e:5.6}
     \||y|^{-\alpha}\,\chi_{(-r,r)}\|_{L_k^{q}(\mathbb{R})} = \frac{r^{-\alpha+\frac{2(k+1)}{q}}}{(-\alpha q+2(k+1))^{\frac{1}{q}}}.
 \end{equation*}
 We deduce that
\begin{equation}\label{eq:5.6}
\|e^{-t\lambda^2} D_k^M\left(f\chi_{(-r,r)}\right) \|_{L_k^{q}(\mathbb{R})} \le  \frac{C\, t^{-\left(\frac{k+1}{q}\right)}\,r^{-\alpha+\frac{2(k+1)}{q}}}{|b|^{k+1}\,(-\alpha q+2(k+1))^{\frac{1}{q}}}\, \||y|^\alpha\,f\|_{L_k^{p}(\mathbb{R})}.   
\end{equation}
On the other hand
 \begin{equation*}
\|e^{-t\lambda^2}D_k^M\left(f\chi_{\mathbb{R}\setminus(-r,r)}\right)\|_{L_k^{q}(\mathbb{R})} \le \|e^{-t\lambda^2}\|_{L^\infty_k(\mathbb{R})}\, \|D_k^M\left(f\chi_{\mathbb{R}\setminus(-r,r)}\right)\|_{L_k^{q}(\mathbb{R})}.
 \end{equation*}
By using Young's inequality \eqref{eq:3.1} and equation \eqref{5.1}, we have
 \begin{eqnarray}\label{e:5.7}
 \|e^{-t\lambda^2}D_k^M\left(\chi_{\mathbb{R}\setminus(-r,r)}\,f\right)\|_{L_k^{q}(\mathbb{R})} &\le&   
 \frac{r^{-\alpha}}{|b|^{(k+1)\left(1-\frac{2}{q}\right)}} \, \||y|^\alpha f\|_{L^p_k(\mathbb{R})}.
 \end{eqnarray}
By taking $r = t^{\frac{1}{2}}$ and combining equations  \eqref{eq:5.6} and \eqref{e:5.7}, we obtain the required result.
\end{proof}
\begin{proof} {\bf of Theorem \ref{t:4.3}.}
    Let $f\in L_k^p(\mathbb{R})$, where $1<p\le 2$. For $t>0$, we consider
\begin{equation} \label{e:4.6}
\|(1-e^{-t\lambda^2})\, D_k^M(f)\|_{L_k^{q}(\mathbb{R})} \le t^{\frac{\beta}{2}}\, \|(t\lambda^2)^{\frac{-\beta}{2}}\, (1-e^{-t\lambda^2})\|_{L_k^\infty(\mathbb{R})}\, \||\lambda|^\beta\, D_k^M(f)\|_{L_k^{q}(\mathbb{R})}. 
\end{equation}
We will give the proof by dividing 
$\beta$ into two parts: $\beta\leq 2$ and $\beta> 2$ . \\
 If $\beta \le 2$, then $r^{\frac{-\beta}{2}}(1-e^{-r})$ is bounded for $r \ge0$. 
In view of  Lemma \ref{l:6.2} and estimate \eqref{e:4.6}, we obtain that
\begin{equation*} 
    \|D_k^M(f)\|_{L_k^{q}(\mathbb{R})}\le C\left( t^{-\frac{\alpha}{2}}\, \| |y|^\alpha f\|_{{L^p_k(\mathbb{R})} }+ t^{\frac{\beta}{2}}\||\lambda|^\beta\, D_k^M(f)\|_{L_k^{q}(\mathbb{R})}\right).
\end{equation*}
By choosing  $t = \left( \frac{\alpha}{\beta}\, \frac{\| |y|^\alpha f\|_{{L^p_k(\mathbb{R})} }}{\||\lambda|^\beta\, D_k^M(f)\|_{L_k^{q}(\mathbb{R})}}\right)^{\frac{2}{\alpha+\beta}}$, we get the required result
\begin{equation} \label{e:5.10}
    \|D_k^M(f)\|_{L^{q}_k(\mathbb{R})} \le C\, \||y|^\alpha\,f\|^{\frac{\beta}{\alpha+\beta}}_{L_k^p(\mathbb{R})}\, \||\lambda|^\beta\, D_k^M(f)\|^{\frac{\alpha}{\alpha+\beta}}_{L_k^{q}(\mathbb{R})}, \quad \text{for} \,\,\,\beta\le 2.
\end{equation}
 Now, we consider the second case $\beta >2$. Let us take $u$ to lie between $0$ and $\beta$, which gives the inequality
\begin{equation}\label{5.6}
    r^{u} \le 1+r^{\beta}, \qquad\text{for}\,\, r= \frac{|\lambda|}{\epsilon}, \,\,\epsilon >0.
\end{equation}
From \eqref{5.6}, we can obtain that
\begin{equation*}
\| |\lambda|^{u} \, D_k^M(f)\|_{L^{q}_k(\mathbb{R})} \le \epsilon^{u}\, \|D_k^M(f)\|_{L^{q}_k(\mathbb{R})}\,+\, \epsilon^{u-\beta}\, \| |\lambda|^{\beta}\, D_k^M(f)\|_{L^{q}_k(\mathbb{R})}.
\end{equation*}
By choosing 
$\epsilon = \left( \frac{\beta-u}{u}\right)^{\frac{1}{\beta}} \, 
\left(\frac{\||\lambda|^\beta\, D_k^M(f)\|^{\frac{1}{\beta}}_{L_k^{q}(\mathbb{R})}}{\|D_k^M(f)\|^{\frac{1}{\beta}}_{L_k^{q}(\mathbb{R})}}\right)$, we deduce that
\begin{equation} \label{e:5.12}
\| |\lambda|^{u} \, D_k^M(f)\|_{L^{q}_k(\mathbb{R})} \le \frac{\beta}{\beta-u}(\beta-u)^\frac{u}{\beta}\,\||\lambda|^\beta\, D_k^M(f)\|^{\frac{u}{\beta}}_{L_k^{q}(\mathbb{R})}\,\|D_k^M(f)\|^{(1-{\frac{u}{\beta})}}_{L_k^{q}(\mathbb{R})}. 
\end{equation}
It is evident that from equations \eqref{e:5.10} and \eqref{e:5.12}, we achieve the required result.
\end{proof}

\begin{subsection}{Nash and Clarkson type uncertainty principles}
 The classical Nash inequality, introduced by Nash \cite{nash1958continuity} expressed as
\begin{equation}\label{e:4.11}
    \|f\|_2^{2+\frac{4}{n}}\le C_n\,\|f\|^{\frac{4}{n}}_{1}\, \||\xi| \hat{f}\|^2_2,
\end{equation}
where  $f\in L^1(\mathbb{R}^n)\cap L^2(\mathbb{R}^n)$.  It demonstrates how quantum mechanics imposes limits on the simultaneous measurement of time and energy. This inequality was developed to establish regularity properties for solutions to parabolic partial differential equations, and the optimal constant
 $C_n$ was computed in  \cite{carlen1993sharp}. Analogously to equation \eqref{e:4.11}, we will establish a Nash-type inequality for the LCDT in the following theorems. For the sake of notation, we define $C_{k,b} = \frac{1}{|b|^{k+1}}$, and throughout this section, $q$ represents the conjugate exponent of $p$.

 \begin{theorem}\label{Th:3.4}
Let  $f$ belongs to $ L
 _k^1(\mathbb{R})$ and $L^p_k(\mathbb{R}), \,\, 1<p\le 2$. Then 
 \begin{equation*}
 \|D_k^M(f)\|_{L_k^{q}(\mathbb{R})} \le C(b,q,k) \, \|f\|_{L_k^1(\mathbb{R})}^{\frac{qs}{2k+2+qs}}\, \||x|^s\,D_k^M(f)\|_{L_k^{q}(\mathbb{R})}^{\frac{2k+2}{2k+2+qs}}, \quad s>0,
 \end{equation*}
 where $q$ is the conjugate exponent of $p$ and $$C(b,q,k) = \left \{ 1+\frac{C_{k,b}^q}{2^{k+1}\Gamma(k+2)}\right\}^{\frac{1}{q }}.$$
 \end{theorem}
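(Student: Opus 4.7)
The plan is to use the classical splitting-and-optimization strategy that underlies Nash-type inequalities. I would begin by writing
\begin{equation*}
\|D_k^M(f)\|_{L_k^q(\mathbb{R})}^q = \int_{|x|\le r} |D_k^M(f)(x)|^q\,d\mu_k(x) + \int_{|x|>r} |D_k^M(f)(x)|^q\,d\mu_k(x),
\end{equation*}
for an arbitrary $r>0$ to be chosen later. This is the standard device for extracting simultaneous information from a pointwise estimate valid near the origin and a weighted estimate valid in the tail.

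For the inner integral I would use the Riemann--Lebesgue Lemma \ref{le:3.1}, which gives $\|D_k^M(f)\|_{L_k^\infty(\mathbb{R})} \le C_{k,b}\|f\|_{L_k^1(\mathbb{R})}$, combined with the direct computation of the measure of the centered ball:
\begin{equation*}
\mu_k\bigl((-r,r)\bigr) = \int_{-r}^{r} \frac{|x|^{2k+1}}{2^{k+1}\Gamma(k+1)}\,dx = \frac{r^{2k+2}}{2^{k+1}\Gamma(k+2)}.
\end{equation*}
This produces the bound $C_{k,b}^{q}\|f\|_{L_k^1(\mathbb{R})}^{q}\,r^{2k+2}/(2^{k+1}\Gamma(k+2))$ for the first piece. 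For the outer integral I would exploit the crude but sharp pointwise estimate $1\le (|x|/r)^{sq}$ on $\{|x|>r\}$, which immediately yields $r^{-sq}\||x|^s D_k^M(f)\|_{L_k^q(\mathbb{R})}^{q}$.

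Combining these two estimates gives an inequality of the form $\|D_k^M(f)\|_{L_k^q(\mathbb{R})}^q \le A\,r^{2k+2} + B\,r^{-sq}$ where $A = C_{k,b}^q\|f\|_{L_k^1(\mathbb{R})}^q/(2^{k+1}\Gamma(k+2))$ and $B = \||x|^s D_k^M(f)\|_{L_k^q(\mathbb{R})}^q$. The final step is to select
\begin{equation*}
r^{2k+2+sq} = \frac{\||x|^s D_k^M(f)\|_{L_k^q(\mathbb{R})}^q}{\|f\|_{L_k^1(\mathbb{R})}^q},
\end{equation*}
which balances the two terms so that each contributes a factor of $\|f\|_{L_k^1(\mathbb{R})}^{q^2 s/(2k+2+sq)}\||x|^s D_k^M(f)\|_{L_k^q(\mathbb{R})}^{q(2k+2)/(2k+2+sq)}$. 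Summing the two contributions yields the prefactor $1 + C_{k,b}^q/(2^{k+1}\Gamma(k+2))$, and extracting a $q$-th root delivers precisely the statement of the theorem. No real obstacle is expected; the only delicate bookkeeping is making sure the choice of $r$ is exactly the one that reproduces the stated constant $C(b,q,k)$ rather than the strictly optimal one (which would replace the additive $1$ by a slightly smaller Young-type constant).
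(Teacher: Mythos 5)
Your proposal is correct and follows essentially the same route as the paper: the identical splitting of $\|D_k^M(f)\|_{L_k^q(\mathbb{R})}^q$ over $(-r,r)$ and its complement, the Riemann--Lebesgue bound together with $\gamma_k((-r,r))=r^{2k+2}/(2^{k+1}\Gamma(k+2))$ for the inner piece, the pointwise estimate $1\le(|x|/r)^{sq}$ for the tail, and the same choice of $r$ balancing the two terms. The exponent bookkeeping and the resulting constant $C(b,q,k)$ both check out.
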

 \begin{proof}
    For  $r>0$, we consider
 \begin{equation} \label{e:4.12}
 \|D_k^M(f)\|^q_{L_k^{q}(\mathbb{R})} =    \|\left(1-\chi_{(-r,r)}\right) \, D_k^M(f)\|^{q}_{L_k^{q}(\mathbb{R})}+ \|\chi_{(-r,r)}\, D_k^M(f)\|^{q}_{L_k^{q}(\mathbb{R})}.   
 \end{equation}
By using the fact \eqref{5.1}, we have
 \begin{equation}\label{e:6.2}
 \|\left(1-\chi_{(-r,r)}\right) \, D_k^M(f)\|^{q}_{L_k^{q}(\mathbb{R})}\le \frac{\||x|^s\, D_k^M(f)\|^{q}_{L_k^{q}(\mathbb{R})}}{r^{qs}}.   
 \end{equation}
 On manipulating the second part of \eqref{e:4.12} and invoking
 Riemann-Lebesgue Lemma \ref{le:3.1}, we get
\begin{eqnarray*}
    \|\chi_{(-r,r)}\, D_k^M(f)\|^{q}_{L_k^{q}(\mathbb{R})} &\le& \|\chi_{(-r,r)}\|^{q}_{L_k^{q}(\mathbb{R})}\, \|D_k^M(f)\|^q_{L^\infty_k(\mathbb{R})} \\
    &\le&C_{k,b}^{q}\,\gamma_k\left((-r,r)\right)\,  \|f\|^{q}_{L_k^1(\mathbb{R})},
\end{eqnarray*}
where
\begin{eqnarray} \label{5.9}
\nonumber\gamma_k\left((-r,r)\right) &= &\int_{\mathbb{R}}  \chi_{(-r,r)}(x)\, d\mu_k(x)\\ 
&=& \frac{r^{2k+2}}{2^{k+1}\Gamma(k+2)}.
\end{eqnarray}
Thus, 
\begin{equation}\label{e:6.3}
    \|\chi_{(-r,r)}\,D_k^M(f)\|_{L^{q}_k(\mathbb{R})}^{q} \le C_{k,b}^{q}\,  \frac{r^{2k+2}}{2^{k+1}\Gamma(k+2)}\,\|f\|^{q}_{L_k^1(\mathbb{R})}.
\end{equation}
By combining  \eqref{e:4.12}, \eqref{e:6.2} and \eqref{e:6.3}, we obtain
\begin{equation*}
 \|D_k^M(f)\|_{L_k^{q}(\mathbb{R})}^{q} \le \frac{\||x|^s\, D_k^M(f)\|^{q}_{L_k^{q}(\mathbb{R})}}{r^{qs}}  \,  +  C_{k,b}^{q}\,  \frac{r^{2k+2}}{2^{k+1}\Gamma(k+2)}\,\|f\|^{q}_{L_k^1(\mathbb{R})}. 
\end{equation*}
By choosing $$r= \left[\frac{\||x|^s\, D_k^M(f)\|^{q}_{L_k^{q}(\mathbb{R})} }{\|f\|^{q}_{L_k^1(\mathbb{R})}}\right]^{\frac{1}{qs+2k+2}},$$ we deduce the required result
\begin{equation*}
    \|D_k^M(f)\|_{L_k^{q}(\mathbb{R})}\le C(b,q,k)\, \|f\|_{L_k^1(\mathbb{R})}^{\frac{qs}{2k+2+qs}}\, \||x|^s\, D_k^M(f)\|_{L_k^{q}(\mathbb{R})}^{\frac{2k+2}{2k+2+qs}}.
\end{equation*}
This completes the proof of the theorem.
\begin{remark}
    For $q=2$, the inequality in Theorem \ref{Th:3.4} reduces to \begin{equation*}
    \|f\|_{L_k^{2}(\mathbb{R})}\le C(b,k)\, \|f\|_{L_k^1(\mathbb{R})}^{\frac{s}{k+s+1}}\, \||x|^s\, D_k^M(f)\|_{L_k^{2}(\mathbb{R})}^{\frac{k+1}{k+s+1}}.
\end{equation*} 
\end{remark}

\end{proof}

\begin{theorem} If the function $f$ belongs to $L_k^2(\mathbb{R})$ and $ L_k^p(\mathbb{R}), $  with $1<p<2$, then 
\begin{equation*}
    \|f\|_{L_k^2(\mathbb{R})}\le C(b,q,k)\, \|f\|^{\frac{2sq}{(2k+2)\,(q-2)+2sq}}_{L_k^p(\mathbb{R})}\,\||x|^s\,D_k^M(f)\|_{L_k^2(\mathbb{R})}^{\frac{(2k+2)\,(q-2)}{(2k+2)\,(q-2)+2sq}},\quad s>0,
\end{equation*}
where
\begin{equation*}
 C(b,q,k)=   \left\{1+\left(\frac{C_{k,b}^2}{2^{k+1}\, \Gamma(k+2)}\right)^{\frac{q-2}{q}}\right\}^{\frac{1}{2}}.
\end{equation*}
\end{theorem}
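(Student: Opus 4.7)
The plan is to adapt the proof strategy of Theorem \ref{Th:3.4} (the $p=1$ Nash-type inequality) by replacing the use of the Riemann-Lebesgue lemma with Young's inequality (Lemma \ref{le:3.2}), and by using the Plancherel identity \eqref{e:2.3} to recast the target norm $\|f\|_{L_k^2(\mathbb{R})}$ as $\|D_k^M(f)\|_{L_k^2(\mathbb{R})}$. The assumption $1<p<2$ yields a conjugate exponent $q>2$, which is exactly what makes the H\"older interpolation step below available.

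After applying Plancherel, I would split, for $r>0$,
\begin{equation*}
\|D_k^M(f)\|_{L_k^2(\mathbb{R})}^2 = \|(1-\chi_{(-r,r)})\,D_k^M(f)\|_{L_k^2(\mathbb{R})}^2 + \|\chi_{(-r,r)}\,D_k^M(f)\|_{L_k^2(\mathbb{R})}^2.
\end{equation*}
The first piece is handled exactly as in Theorem \ref{Th:3.4}: the pointwise bound \eqref{5.1} gives the estimate $r^{-2s}\,\||x|^s D_k^M(f)\|_{L_k^2(\mathbb{R})}^2$. The second piece is the genuinely new step. Here I would apply H\"older's inequality with exponents $q/2$ and $q/(q-2)$, giving
\begin{equation*}
\|\chi_{(-r,r)}\,D_k^M(f)\|_{L_k^2(\mathbb{R})}^2 \le \|D_k^M(f)\|_{L_k^q(\mathbb{R})}^2\, \bigl(\gamma_k((-r,r))\bigr)^{(q-2)/q},
\end{equation*}
then invoke Young's inequality \eqref{eq:3.1} to bound $\|D_k^M(f)\|_{L_k^q(\mathbb{R})}\le C_{k,b}^{(q-2)/q}\,\|f\|_{L_k^p(\mathbb{R})}$, and finally substitute the explicit volume formula from \eqref{5.9}. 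Collecting the factors produces the constant $\left(\tfrac{C_{k,b}^2}{2^{k+1}\Gamma(k+2)}\right)^{(q-2)/q}$ multiplying $\|f\|_{L_k^p(\mathbb{R})}^2\, r^{(2k+2)(q-2)/q}$.

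To conclude, I would optimize over $r$ by balancing the two contributions, i.e., by choosing
\begin{equation*}
r^{2s+(2k+2)(q-2)/q} = \frac{\||x|^s D_k^M(f)\|_{L_k^2(\mathbb{R})}^2}{\|f\|_{L_k^p(\mathbb{R})}^2}.
\end{equation*}
This makes the two terms coincide up to the factor $1+\left(\tfrac{C_{k,b}^2}{2^{k+1}\Gamma(k+2)}\right)^{(q-2)/q}$, so that after taking a square root the exponents $\tfrac{2sq}{(2k+2)(q-2)+2sq}$ on $\|f\|_{L_k^p(\mathbb{R})}$ and $\tfrac{(2k+2)(q-2)}{(2k+2)(q-2)+2sq}$ on $\||x|^s D_k^M(f)\|_{L_k^2(\mathbb{R})}$ emerge naturally, together with the constant $C(b,q,k)$ advertised in the statement.

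The main obstacle is really just bookkeeping: tracking the exponents $q$, $2$, $s$, $2k+2$ through the H\"older--Young chain and verifying that the optimal $r$ produces exactly the claimed constant. No deep analytical input is required beyond Plancherel's theorem and Young's inequality, both already available from Section \ref{S:2}.
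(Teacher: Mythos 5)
Your proposal is correct and follows essentially the same route as the paper's own proof: the same split of $\|D_k^M(f)\|_{L_k^2}^2$ at radius $r$, the same H\"older--Young estimate on the $\chi_{(-r,r)}$ piece, and the same optimizing choice of $r$; the only cosmetic difference is that you invoke Plancherel at the start while the paper invokes it at the end.
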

\begin{proof}
    For $r>0$, we consider 
\begin{equation}\label{e:4.19}
\|D_k^M(f)\|^2_{L_k^2(\mathbb{R})} =  \|(1-\chi_{(-r,r)})\, D_k^M(f)\|^2_{L_k^2(\mathbb{R})}+ \|\chi_{(-r,r)}\, D_k^M(f)\|^2_{L_k^2(\mathbb{R})}.
\end{equation}
From \eqref{5.1}, we immediately obtain that
\begin{equation}\label{e:6.4}
    \|(1-\chi_{(-r,r)})\, D_k^M(f)\|^2_{L_k^2(\mathbb{R})} \le \,\frac{\||x|^s\,D_k^M(f)\|^2_{L_k^2(\mathbb{R})}}{r^{2s}}.
\end{equation}
On the other hand, by using H{\"o}lder's inequality, Young's inequality \eqref{eq:3.1} and \eqref{5.9}, we obtain 
\begin{eqnarray} \label{5.14}
\nonumber   \|\chi_{(-r,r)}\, D_k^M(f)\|^2_{L_k^2(\mathbb{R})}&\le& \left(\gamma_k((-r,r))\right)^{\frac{q-2}{q}} \, \|D_k^M(f)\|^2_{L_k^{q}(\mathbb{R})}\\
&\le& \left(\frac{r^{2k+2}\,C_{k,b}^2}{2^{k+1}\, \Gamma(k+2)}\right)^{\frac{q-2}{q}}\,\|f\|^2_{L^p_k(\mathbb{R})}.
\end{eqnarray}
By plugging  \eqref{e:6.4} and \eqref{5.14} in \eqref{e:4.19}, we have \begin{multline*} 
\|D_k^M(f)\|^2_{L_k^2(\mathbb{R})} \le  \,\frac{\||x|^s\,D_k^M(f)\|^2_{L_k^2(\mathbb{R})}}{r^{2s}}\,+ \left(\frac{r^{2k+2}\,C_{k,b}^2}{2^{k+1}\, \Gamma(k+2)}\right)^{\frac{q-2}{q}}\,\|f\|^2_{L^p_k(\mathbb{R})}. 
\end{multline*}
 By choosing \begin{equation*}
    r = \left[\frac{\||x|^s\,D_k^M(f)\|^2_{L_k^2(\mathbb{R})} }{\|f\|^2_{L^p_k(\mathbb{R})}}\right]^{\frac{q}{(2k+2)(q-2)+2sq}},
\end{equation*}
 we deduce that
\begin{equation*}
  \|D_k^M(f)\|_{L_k^2(\mathbb{R})}\le C(b,q,k)\, \|f\|^{\frac{2sq}{(2k+2)\,(q-2)+2sq}}_{L_k^p(\mathbb{R})}\,\||x|^s\,D_k^M(f)\|_{L_k^2(\mathbb{R})}^{\frac{(2k+2)\,(q-2)}{(2k+2)\,(q-2)+2sq}}.   
\end{equation*}
 The required result immediately follows from Plancherel's formula \eqref{e:2.3}.
\end{proof}

\begin{theorem} \label{t:4.8}
Let $1<p_1<p_2\le2$, with $q_1$ and $q_2$ being the conjugate exponents of $p_1$ and $p_2$, respectively. Suppose $f$ belongs to $ L_k^{p_1}(\mathbb{R})$ and $ L_k^{p_2}(\mathbb{R})$. Then there exists a constant $C(q_1,q_2,k,b,s)$ such that
\begin{equation*}
    \|D_k^M(f)\|_{L_k^{q_2}(\mathbb{R})} \le C(q_1,q_2,k,b) \|f\|^{\frac{sq_1q_2}{(2k+2)\,(q_1-q_2)+sq_1q_2}}_{L_k^{p_1}(\mathbb{R})}\||x|^s\, D_k^M(f)\|_{L_k^{q_2}(\mathbb{R})}^{\frac{(2k+2)\,(q_1-q_2)}{(2k+2)\,(q_1-q_2)+sq_1q_2}},
\end{equation*}
for some $s>0$, and
\begin{equation*}
 C(q_1,q_2,k,b) = \left\{ 1+\left(\frac{1}{2^{k+1}\,\Gamma(k+2) }\right)^{\frac{q_1-q_2}{q_1}}\, C_{k,b}^{(1-\frac{2}{q_1})q_2}\right\} ^{\frac{1}{q_{2}}}.  
\end{equation*}

\end{theorem}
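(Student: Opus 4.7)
The plan is to mimic the strategy of Theorem \ref{Th:3.4} (and its $L_k^2$-version established just before this theorem), but with Hölder's inequality applied at the exponent $q_1$ rather than $L_k^\infty$. Concretely, I would start from the decomposition, valid for arbitrary $r>0$,
\begin{equation*}
\|D_k^M(f)\|^{q_2}_{L_k^{q_2}(\mathbb{R})} = \|\chi_{(-r,r)}\,D_k^M(f)\|^{q_2}_{L_k^{q_2}(\mathbb{R})} + \|(1-\chi_{(-r,r)})\,D_k^M(f)\|^{q_2}_{L_k^{q_2}(\mathbb{R})},
\end{equation*}
and bound the two pieces separately, aiming to get one piece controlled by $\||x|^s D_k^M(f)\|_{L_k^{q_2}}$ and the other by $\|f\|_{L_k^{p_1}}$.

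For the tail, I would use the pointwise inequality $|(1-\chi_{(-r,r)})(x)|\le |x|^s r^{-s}$ exactly as in \eqref{5.1}, yielding
\begin{equation*}
\|(1-\chi_{(-r,r)})\,D_k^M(f)\|^{q_2}_{L_k^{q_2}(\mathbb{R})} \le r^{-sq_2}\,\||x|^s\,D_k^M(f)\|^{q_2}_{L_k^{q_2}(\mathbb{R})}.
\end{equation*}
For the bulk term, the key observation is that since $1<p_1<p_2\le 2$ we have $q_1>q_2\ge 2$, so Hölder's inequality with the dual exponents $(q_1/q_2, \,q_1/(q_1-q_2))$ applies and gives
\begin{equation*}
\|\chi_{(-r,r)}\,D_k^M(f)\|^{q_2}_{L_k^{q_2}(\mathbb{R})} \le \bigl(\gamma_k((-r,r))\bigr)^{(q_1-q_2)/q_1}\,\|D_k^M(f)\|^{q_2}_{L_k^{q_1}(\mathbb{R})}.
\end{equation*}
I then invoke Young's inequality (Lemma \ref{le:3.2}) in the form $\|D_k^M(f)\|_{L_k^{q_1}}\le C_{k,b}^{1-2/q_1}\,\|f\|_{L_k^{p_1}}$ (legitimate since $1<p_1<2$), together with the explicit measure computation \eqref{5.9}, to convert this into
\begin{equation*}
\|\chi_{(-r,r)}\,D_k^M(f)\|^{q_2}_{L_k^{q_2}(\mathbb{R})} \le \left(\frac{r^{2k+2}}{2^{k+1}\Gamma(k+2)}\right)^{\!(q_1-q_2)/q_1}\!C_{k,b}^{(1-2/q_1)q_2}\,\|f\|^{q_2}_{L_k^{p_1}(\mathbb{R})}.
\end{equation*}

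Finally, I would optimize in $r$ exactly as in the proofs of Theorem \ref{Th:3.4} and its $L^2$-companion: choosing
\begin{equation*}
r = \left[\frac{\||x|^s\,D_k^M(f)\|^{q_2}_{L_k^{q_2}(\mathbb{R})}}{\|f\|^{q_2}_{L_k^{p_1}(\mathbb{R})}}\right]^{q_1/[sq_1 q_2+(2k+2)(q_1-q_2)]}
\end{equation*}
makes the two pieces share the same powers of $\||x|^s D_k^M(f)\|_{L_k^{q_2}}$ and $\|f\|_{L_k^{p_1}}$, so their sum collapses to exactly the claimed product with the constant $1+\bigl(1/(2^{k+1}\Gamma(k+2))\bigr)^{(q_1-q_2)/q_1}C_{k,b}^{(1-2/q_1)q_2}$, and taking $q_2$-th roots then gives $C(q_1,q_2,k,b)$ as stated.

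I don't anticipate any serious obstacle: the argument is essentially a recycling of the two Nash-type proofs already in the section, with $L_k^\infty$ replaced by $L_k^{q_1}$. The only point requiring real care is the exponent bookkeeping at the optimization step, together with verifying that the Hölder pairing $1/q_2=1/q_1+(q_1-q_2)/(q_1q_2)$ produces precisely the power $(q_1-q_2)/q_1$ on $\gamma_k((-r,r))$ after the $q_2$-th power is taken — a routine check, but one where an off-by-one in the conjugate-exponent arithmetic would propagate into the final constant.
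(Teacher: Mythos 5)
Your proposal is correct and follows essentially the same route as the paper: the same bulk/tail decomposition, the pointwise bound \eqref{5.1} for the tail, H\"older's inequality with exponents $q_1/q_2$ and $q_1/(q_1-q_2)$ combined with Young's inequality \eqref{eq:3.1} and the measure formula \eqref{5.9} for the bulk, and the identical optimizing choice of $r$. The exponent bookkeeping you flag as the delicate point does work out exactly as you describe, yielding the stated constant.
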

\begin{proof}
   For $r>0$, we consider
\begin{equation}\label{e:4.25}
\|D_k^M(f)\|^{q_2}_{L_k^{q_2}(\mathbb{R})}=  \|(1-\chi_{(-r,r)})\,D_k^M(f)\|^{q_2}_{L_k^{q_2}(\mathbb{R})} +   \|\chi_{(-r,r)}\, D_k^M(f)\|^{q_2}_{L_k^{q_2}(\mathbb{R})}.
\end{equation}
In the view of \eqref{5.1}, we have
    \begin{equation} \label{e:6.7}
\|(1-\chi_{(-r,r)})\,D_k^M(f)\|^{q_2}_{L_k^{q_2}(\mathbb{R})} \le \frac{\||x|^s\, D_k^M(f)\| ^{q_2}_{L_k^{q_2}(\mathbb{R})}}{r^{q_2s}}  \, .     
    \end{equation}
Using H{\"o}lder's inequality, Lemma \ref{le:3.2} and  the equation \eqref{5.9}, we obtain
\begin{eqnarray} \label{5.19}
\nonumber  \|\chi_{(-r,r)}\, D_k^M(f)\|^{q_2}_{L_k^{q_2}(\mathbb{R})} &\le& \left(\gamma_k((-r,r))\right)^{\frac{q_1-q_2}{q_1}} \, \|D_k^M(f)\|^{q_2}_{L_k^{q_1}(\mathbb{R})}\\
&\le& \left(\frac{r^{2k+2}}{2^{k+1}\,\Gamma(k+2) }\right)^{\frac{q_1-q_2}{q_1}}\, C_{k,b}^{(1-\frac{2}{q_1})q_2}\,\|f\|^{q_2}_{L_k^{p_1}(\mathbb{R})}.   
\end{eqnarray}
By combining \eqref{e:4.25}, \eqref{e:6.7} and \eqref{5.19}, we have
\begin{multline}\label{5.20}
\|D_k^M(f)\|^{q_2}_{L_k^{q_2}(\mathbb{R})} \le \frac{\||x|^s D_k^M(f)\| ^{q_2}_{L_k^{q_2}(\mathbb{R})}}{r^{q_2s}}  + \left(\frac{r^{2k+2}}{2^{k+1}\Gamma(k+2) }\right)^{\frac{q_1-q_2}{q_1}} C_{k,b}^{(1-\frac{2}{q_1})q_2}\|f\|^{q_2}_{L_k^{p_1}(\mathbb{R})}.
\end{multline}
By setting 
\begin{equation*}
    r= \left[\frac{\||x|^s\, D_k^M(f)\| ^{q_2}_{L_k^{q_2}(\mathbb{R})}}{\|f\|^{q_2}_{L_k^{p_1}(\mathbb{R})}}\right]^{\frac{q_1}{(2k+2)(q_1-q_2)+sq_1q_2}},
\end{equation*}
and substituting into  \eqref{5.20}, we obtain the desired result.
\end{proof}
Now, we will discuss the Clarkson-type  inequalities:
\begin{theorem}
If $f$ belongs to $L^1_k(\mathbb{R})$ and $ L^p_k(\mathbb{R}), 1<p\le 2$, then 
\begin{equation*}
\|f\|_{L^1_k(\mathbb{R})}  \le C(q,k)\,\|f\|_{L_k^p(\mathbb{R})}^{\frac{qs}{2k+2+qs}}\, \||y|^s\,f\|_{L^1_k(\mathbb{R})}^{\frac{2k+2}{2k+2+qs}}, \,\,\, s>0 
\end{equation*}
where
\begin{equation*}
    C(q,k) = \left\{ \frac{1}{(2^{k+1}
    \,\Gamma(k+2)\,)^{\frac{1}{q}}}+ 1\right\}.
\end{equation*} 
\end{theorem}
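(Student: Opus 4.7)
The plan is to mimic the splitting argument used for the Nash-type inequalities (Theorems \ref{Th:3.4} and \ref{t:4.8}), working directly in $L^1_k(\mathbb{R})$ rather than in $L^{q_2}_k$ applied to $D^M_k(f)$. Concretely, for $r>0$ I would write
\begin{equation*}
    \|f\|_{L^1_k(\mathbb{R})} = \|\chi_{(-r,r)}f\|_{L^1_k(\mathbb{R})} + \|(1-\chi_{(-r,r)})f\|_{L^1_k(\mathbb{R})},
\end{equation*}
which is an exact equality because the two pieces have disjoint supports.

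For the outer piece I would use the pointwise bound $|f(y)\chi_{\mathbb{R}\setminus(-r,r)}(y)|\le r^{-s}|y|^s|f(y)|$ (the same inequality \eqref{5.1} used throughout Section \ref{s:5}) to get $\|(1-\chi_{(-r,r)})f\|_{L^1_k(\mathbb{R})}\le r^{-s}\||y|^s f\|_{L^1_k(\mathbb{R})}$. For the inner piece I would apply H\"older's inequality with the conjugate exponents $p$ and $q$, followed by the Dunkl measure computation \eqref{5.9}, which yields
\begin{equation*}
\|\chi_{(-r,r)}f\|_{L^1_k(\mathbb{R})}\le \bigl(\gamma_k((-r,r))\bigr)^{1/q}\|f\|_{L^p_k(\mathbb{R})} = \frac{r^{(2k+2)/q}}{(2^{k+1}\Gamma(k+2))^{1/q}}\,\|f\|_{L^p_k(\mathbb{R})}.
\end{equation*}
Adding the two estimates gives
\begin{equation*}
\|f\|_{L^1_k(\mathbb{R})}\le \frac{r^{(2k+2)/q}}{(2^{k+1}\Gamma(k+2))^{1/q}}\,\|f\|_{L^p_k(\mathbb{R})} + r^{-s}\,\||y|^s f\|_{L^1_k(\mathbb{R})}.
\end{equation*}

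Finally I would choose $r$ to balance the two terms in exactly the style of the previous proofs, namely
\begin{equation*}
r = \left[\frac{\||y|^s f\|_{L^1_k(\mathbb{R})}}{\|f\|_{L^p_k(\mathbb{R})}}\right]^{\frac{q}{2k+2+qs}}.
\end{equation*}
A short calculation shows both terms become equal to $\|f\|_{L^p_k(\mathbb{R})}^{qs/(2k+2+qs)}\||y|^s f\|_{L^1_k(\mathbb{R})}^{(2k+2)/(2k+2+qs)}$ multiplied by $(2^{k+1}\Gamma(k+2))^{-1/q}$ and $1$ respectively, and summing yields the claimed constant $C(q,k)=\{(2^{k+1}\Gamma(k+2))^{-1/q}+1\}$.

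There is no real obstacle here; the argument is entirely parallel to Theorems \ref{Th:3.4} and \ref{t:4.8}. The only thing that needs a moment of care is the bookkeeping of the exponents to verify that the chosen $r$ produces exactly the ratio $\frac{qs}{2k+2+qs}$ on $\|f\|_{L^p_k}$ and $\frac{2k+2}{2k+2+qs}$ on $\||y|^s f\|_{L^1_k}$; checking that these two exponents sum to $1$ is the natural sanity check. The hypothesis $1<p\le 2$ is needed only so that a genuine conjugate exponent $q\in[2,\infty)$ exists for H\"older's inequality on $(-r,r)$.
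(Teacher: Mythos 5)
Your proposal is correct and follows essentially the same route as the paper's proof: the same split of $\|f\|_{L^1_k(\mathbb{R})}$ into the pieces on $(-r,r)$ and its complement, the same use of the pointwise bound \eqref{5.1} for the outer piece, H\"older's inequality together with \eqref{5.9} for the inner piece, and the same optimizing choice of $r$. The exponent bookkeeping you describe checks out and reproduces the stated constant $C(q,k)$.
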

\begin{proof}
     For  $r>0$, we consider the following inequality
\begin{equation}\label{e:4.16}
 \|f\|_{L^1_k(\mathbb{R})} \le   \|\left(1-\chi_{(-r,r)}\right)f\|_{L_k^1(\mathbb{R})}+  \|\chi_{(-r,r)}\,f\|_{L^1_k(\mathbb{R})}.
 \end{equation}
 Using the fact \eqref{5.1}, we obtain
\begin{equation} \label{5.11}
 \|\left(1-\chi_{(-r,r)}\right)f\|_{L_k^1(\mathbb{R})}\le \frac{\||y|^s\,f\|_{L^1_k(\mathbb{R})}}{r^s}.  
\end{equation}
By applying H{\"o}lder's inequality and  \eqref{5.9}, we have
\begin{eqnarray} \label{5.12}
 \nonumber\|\chi_{(-r,r)}\,f\|_{L^1_k(\mathbb{R})}&\le& \|\chi_{(-r,r)}\|_{L_k^q(\mathbb{R})}\, \|f\|_{L_k^p(\mathbb{R})}\\
&=& \left[\frac{r^{2k+2}}{2^{k+1}\Gamma(k+2)}\right]^{\frac{1}{q}}\, \|f\|_{L_k^p(\mathbb{R})}
\end{eqnarray}
From inequalities \eqref{e:4.16}, \eqref{5.11} and \eqref{5.12}, we obtain that
\begin{equation*}
 \|f\|_{L^1_k(\mathbb{R})} \le  \frac{\||y|^s\,f\|_{L^1_k(\mathbb{R})}}{r^s}+\left[\frac{r^{2k+2}}{2^{k+1}\Gamma(k+2)}\right]^{\frac{1}{q}}\, \|f\|_{L_k^p(\mathbb{R})}.   
\end{equation*}
By choosing \begin{equation*}
 r= \left[\frac{\||y|^s\,f\|_{L^1_k(\mathbb{R})} }{\|f\|_{L_k^p(\mathbb{R})}} \right]^{\frac{q}{2k+2+qs}},  
\end{equation*}
we derive that
\begin{equation*}
    \|f\|_{L^1_k(\mathbb{R})}  \le C(q,k)\,\|f\|_{L_k^p(\mathbb{R})}^{\frac{qs}{2k+2+qs}}\, \||y|^sf\|_{L^1_k(\mathbb{R})}^{\frac{2k+2}{2k+2+qs}}. 
\end{equation*}
This concludes the proof.

\end{proof}
\begin{theorem} If   $f$ belongs to  $L^2_k(\mathbb{R})$ and $ L_k^p(\mathbb{R}) $, $1<p<2, $ then 
\begin{equation*}
 \|f\|_{L_k^p(\mathbb{R})}    \le C(k,p) \|f\|^{\frac{ps}{(k+1)(2-p)+ps}}_{L_k^2(\mathbb{R})} \||y|^s\, f\|^{\frac{(k+1)(2-p)}{(k+1)(2-p)+ps}}_{L_k^p(\mathbb{R})},~ s>0
\end{equation*}
where
\begin{equation*}
C(k,p) = \left\{ 1+\frac{1}{(2^{k+1}\,\Gamma(k+2))^{\frac{2-p}{2}}}\right\}^{\frac{1}{p}}.
\end{equation*}
\end{theorem}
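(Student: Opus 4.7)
The plan is to mirror the structure of the earlier Nash/Clarkson-type arguments (Theorems~3.4--3.6 and the first Clarkson-type theorem above), simply transposing the ambient norm from $L_k^q$ or $L_k^1$ to $L_k^p$. Since $f$ itself is decomposed (rather than its LCDT), no appeal to $D_k^M$ will be needed; only H\"older's inequality and the volume formula $\gamma_k((-r,r))=r^{2k+2}/(2^{k+1}\Gamma(k+2))$ from \eqref{5.9} are required.

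First, for $r>0$ split $f=(1-\chi_{(-r,r)})f+\chi_{(-r,r)}f$. Because the two pieces have disjoint supports,
\begin{equation*}
\|f\|_{L_k^p(\mathbb{R})}^{p}=\|(1-\chi_{(-r,r)})f\|_{L_k^p(\mathbb{R})}^{p}+\|\chi_{(-r,r)}f\|_{L_k^p(\mathbb{R})}^{p}.
\end{equation*}
For the tail piece, the pointwise bound $|(1-\chi_{(-r,r)})(y)f(y)|\le r^{-s}|y|^{s}|f(y)|$ (a use of \eqref{5.1} exactly as in the proofs of Theorems~3.4--3.8) yields
\begin{equation*}
\|(1-\chi_{(-r,r)})f\|_{L_k^p(\mathbb{R})}^{p}\le r^{-ps}\,\||y|^{s}f\|_{L_k^p(\mathbb{R})}^{p}.
\end{equation*}

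Next, for the central piece apply H\"older's inequality with conjugate pair $(2p/(2-p),\,2)$, which is the natural choice since $f\in L_k^2(\mathbb{R})$:
\begin{equation*}
\|\chi_{(-r,r)}f\|_{L_k^p(\mathbb{R})}\le\|\chi_{(-r,r)}\|_{L_k^{2p/(2-p)}(\mathbb{R})}\,\|f\|_{L_k^2(\mathbb{R})}=\bigl(\gamma_k((-r,r))\bigr)^{(2-p)/(2p)}\|f\|_{L_k^2(\mathbb{R})}.
\end{equation*}
Raising to the $p$-th power and inserting \eqref{5.9} gives
\begin{equation*}
\|\chi_{(-r,r)}f\|_{L_k^p(\mathbb{R})}^{p}\le\frac{r^{(k+1)(2-p)}}{(2^{k+1}\Gamma(k+2))^{(2-p)/2}}\,\|f\|_{L_k^2(\mathbb{R})}^{p}.
\end{equation*}

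Adding the two estimates yields
\begin{equation*}
\|f\|_{L_k^p(\mathbb{R})}^{p}\le\frac{\||y|^{s}f\|_{L_k^p(\mathbb{R})}^{p}}{r^{ps}}+\frac{r^{(k+1)(2-p)}\,\|f\|_{L_k^2(\mathbb{R})}^{p}}{(2^{k+1}\Gamma(k+2))^{(2-p)/2}}.
\end{equation*}
Finally, I would equalize the two terms by choosing
\begin{equation*}
r=\left[\frac{\||y|^{s}f\|_{L_k^p(\mathbb{R})}^{p}}{\|f\|_{L_k^2(\mathbb{R})}^{p}}\right]^{\frac{1}{ps+(k+1)(2-p)}},
\end{equation*}
exactly analogous to the optimization performed in Theorem~3.4 and Theorem~\ref{t:4.8}. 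Substituting back, collecting the prefactor $1+(2^{k+1}\Gamma(k+2))^{-(2-p)/2}$, and taking the $p$-th root produces the announced inequality with the stated constant $C(k,p)$. There is no genuine obstacle here beyond keeping track of exponents: the only non-mechanical step is selecting the H\"older pair $(2p/(2-p),2)$, which is forced by the hypothesis $f\in L_k^2(\mathbb{R})$, and then verifying that the arithmetic of the exponent produced by the optimization matches the claimed fractions $ps/(ps+(k+1)(2-p))$ and $(k+1)(2-p)/(ps+(k+1)(2-p))$.
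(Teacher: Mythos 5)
Your proposal is correct and follows essentially the same route as the paper's own proof: the same disjoint-support decomposition $f=(1-\chi_{(-r,r)})f+\chi_{(-r,r)}f$, the same tail bound via \eqref{5.1}, the same H\"older estimate on the central piece (the paper applies H\"older directly to $\int\chi_{(-r,r)}|f|^p\,d\mu_k$ with exponents $\tfrac{2}{2-p}$ and $\tfrac{2}{p}$, which is exactly your norm-form pairing $(2p/(2-p),2)$), and the identical choice of $r$. No discrepancies to report.
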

\begin{proof}
 For $r>0$, we have
\begin{equation}\label{e:6.5}
\|f\|^p_{L_k^p(\mathbb{R})} = \|\chi_{(-r,r)}\, f\|^p_{L_k^p(\mathbb{R})}+\|(1-\chi_{(-r,r)})f\|^p_{L_k^p(\mathbb{R})}.
\end{equation}
From  \eqref{5.1}, we obtain that
\begin{equation}\label{e:6.6}
\|(1-\chi_{(-r,r)})f\|^p_{L_k^p(\mathbb{R})} \le \frac{\||y|^s\,f\|^p_{L_k^p(\mathbb{R})}}{r^{ps}}.   
\end{equation}
By using H{\"o}lder's inequality and \eqref{5.9}, we obtain    
\begin{eqnarray} \label{5.17}
\nonumber \|\chi_{(-r,r)}\, f\|^p_{L_k^p(\mathbb{R})} &\le& \left(\gamma_k((-r,r))\right)^{\frac{2-p}{2}} \, \|f\|^p_{L_k^2(\mathbb{R})}   \\
\nonumber &\le& \left(\frac{r^{2k+2}}{2^k\,(2k+2)\Gamma(k+1)}\right)^{\frac{2-p}{2}}\,\|f\|^p_{L_k^2(\mathbb{R})}\\
 &=& \frac{r^{(k+1)\,(2-p)}}{(2^{k+1}\,\Gamma(k+2))^{\frac{2-p}{2}}}\,\|f\|^p_{L_k^2(\mathbb{R})}.
\end{eqnarray}
On substituting   \eqref{e:6.6} and \eqref{5.17} in \eqref{e:6.5}, we get 
\begin{equation*}
\|f\|^p_{L_k^p(\mathbb{R})} \le \, \frac{\||y|^s\,f\|^p_{L_k^p(\mathbb{R})}}{r^{ps}}\,+\,\frac{r^{(k+1)\,(2-p)}}{(2^{k+1}\,\Gamma(k+2))^{\frac{2-p}{2}}}\,\|f\|^p_{L_k^2(\mathbb{R})}.
\end{equation*}
By choosing
\begin{equation*}
    r = \left[\frac{ \||y|^s\,f\|^p_{L_k^p(\mathbb{R})}}{ \|f\|^p_{L_k^2(\mathbb{R})}}\right]^{\frac{1}{(k+1)\,(2-p)+ps}},
\end{equation*}
 we deduce that 
\begin{equation*}
 \|f\|_{L_k^p(\mathbb{R})}    \le C(k,p)\, \||y|^s\, f\|^{\frac{(k+1)(2-p)}{(k+1)(2-p)+ps}}_{L_k^p(\mathbb{R})}\, \|f\|^{\frac{ps}{(k+1)(2-p)+ps}}_{L_k^2(\mathbb{R})} .   
\end{equation*}
\end{proof}
\begin{theorem}\label{t:4.9}
If $f$ belongs to $ L_k^{p_1}(\mathbb{R})$ and $ L_k^{p_2}(\mathbb{R}) $, where $1<p_1<p_2\le 2$,  then there exists constant $C(p_1,p_2,k)$ such that 
\begin{equation*}
    \|f\|_{L_k^{p_1}(\mathbb{R})} \le C(p_1,p_2,k) \|f\|_{L_k^{p_2}(\mathbb{R})}^{\frac{p_1p_2s}{(2k+2)(p_2-p_1)+p_1p_2s}}\||y|^s\,f\|^{\frac{(2k+2)\,(p_2-p_1)}{(2k+2)(p_2-p_1)+p_1p_2s}}_{L_k^{p_1}(\mathbb{R})},
\end{equation*}
where,
\begin{equation*}
  C(p_1,p_2,k) = \left\{ 1+\frac{1}{(2^{k+1}\,\Gamma(k+2))^{\frac{p_2-p_1}{p_2}}}\right\}^{\frac{1}{p_1}}.  
\end{equation*}
\begin{proof}
Let $r>0$ and $f\in L_k^{p_1}(\mathbb{R})\cap L_k^{p_2}(\mathbb{R}),$ with $ 1<p_1<p_2\le 2$. Then
\begin{equation}\label{e:4.29}
 \|f\|^{p_1}_{L_k^{p_1}(\mathbb{R})}=\|\left(1-\chi_{(-r,r)}\right) f\|^{p_1}_{L_k^{p_1}(\mathbb{R})}+  \|\chi_{(-r,r)}f\|^{p_1}_{L_k^{p_1}(\mathbb{R})}. 
\end{equation}
From  equation \eqref{5.1}, we have
\begin{equation} \label{e:6.8}
\|\left(1-\chi_{(-r,r)}\right) f\|^{p_1}_{L_k^{p_1}(\mathbb{R})}\le \frac{\||y|^s\,f\|^{p_1}_{L_k^{p_1}}(\mathbb{R})}{r^{p_1s}} .  
\end{equation}
Applying H{\"o}lder's inequality and using equation \eqref{5.9},  we obtain 
\begin{eqnarray} \label{5.21}
\nonumber \|\chi_{(-r,r)}f\|^{p_1}_{L_k^{p_1}(\mathbb{R})}&\le& \left(\gamma_k(-r,r)\right)^{\frac{p_2-p_1}{p_2}}\, \|f\|^{p_1}_{L_k^{p_2}(\mathbb{R})} \\
 &\le& \left(\frac{r^{(2k+2)}}{2^{k+1}\,\Gamma(k+2)\, }\right)^{\frac{p_2-p_1}{p_2}}\, \|f\|^{p_1}_{L_k^{p_2}(\mathbb{R})}.
\end{eqnarray}
By combining  \eqref{e:4.29}, \eqref{e:6.8} and \eqref{5.21}, we have
\begin{equation*}
    \|f\|^{p_1}_{L_k^{p_1}(\mathbb{R})} \le \frac{\||y|^s\,f\|^{p_1}_{L_k^{p_1}(\mathbb{R})}}{r^{p_1s}} +\left(\frac{r^{(2k+2)}}{2^{k+1}\,\Gamma(k+2)\, }\right)^{\frac{p_2-p_1}{p_2}}\, \|f\|^{p_1}_{L_k^{p_2}(\mathbb{R})}.
\end{equation*}
By choosing
\begin{equation*}
    r= \left[\frac{\||y|^s\,f\|^{p_1}_{L_k^{p_1}(\mathbb{R})}}{\|f\|^{p_1}_{L_k^{p_2}(\mathbb{R})}}\right]^{\frac{p_2}{(2k+2)(p_2-p_1)+p_1p_2s}},
\end{equation*}
we deduce that
 \begin{equation*}
    \|f\|_{L_k^{p_1}(\mathbb{R})} \le C(p_1,p_2,k)\, \||y|^s\,f\|^{\frac{(2k+2)\,(p_2-p_1)}{(2k+2)(p_2-p_1)+p_1p_2s}}_{L_k^{p_1}(\mathbb{R})}\, \|f\|_{L_k^{p_2}(\mathbb{R})}^{\frac{p_1p_2s}{(2k+2)(p_2-p_1)+p_1p_2s}}.
\end{equation*}   
\end{proof}    
\end{theorem}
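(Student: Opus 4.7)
The plan is to follow the same splitting-and-optimization template used for the preceding Nash-type and Clarkson-type inequalities in the paper, since the hypotheses and the exponents involved are structurally analogous (just with $p_1,p_2$ in place of the $(p,2)$ or $(1,p)$ pairs handled in the previous theorems). Specifically, I would start from the identity
\begin{equation*}
\|f\|^{p_1}_{L_k^{p_1}(\mathbb{R})} = \|\chi_{(-r,r)}\,f\|^{p_1}_{L_k^{p_1}(\mathbb{R})} + \|\bigl(1-\chi_{(-r,r)}\bigr)\,f\|^{p_1}_{L_k^{p_1}(\mathbb{R})}
\end{equation*}
for an $r>0$ to be chosen at the end.

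For the tail piece, I would invoke the elementary pointwise inequality \eqref{5.1} (namely $|f(y)\chi_{\mathbb{R}\setminus(-r,r)}(y)| \le r^{-s}|y|^{s}|f(y)|$), raising it to the power $p_1$ and integrating against $d\mu_k$, to obtain
\begin{equation*}
\|\bigl(1-\chi_{(-r,r)}\bigr)\,f\|^{p_1}_{L_k^{p_1}(\mathbb{R})} \le \frac{\||y|^s\,f\|^{p_1}_{L_k^{p_1}(\mathbb{R})}}{r^{p_1 s}}.
\end{equation*}
For the core piece, I would apply H\"older's inequality with conjugate exponents $p_2/p_1$ and $p_2/(p_2-p_1)$, which pairs $|f|^{p_1}$ with $\chi_{(-r,r)}$ and yields
\begin{equation*}
\|\chi_{(-r,r)}\,f\|^{p_1}_{L_k^{p_1}(\mathbb{R})} \le \bigl(\gamma_k((-r,r))\bigr)^{(p_2-p_1)/p_2}\,\|f\|^{p_1}_{L_k^{p_2}(\mathbb{R})}.
\end{equation*}
Plugging in the explicit ball volume from \eqref{5.9}, $\gamma_k((-r,r))=r^{2k+2}/(2^{k+1}\Gamma(k+2))$, this becomes a power of $r$ times $\|f\|^{p_1}_{L_k^{p_2}}$, exactly parallel to equation \eqref{5.21} in the previous proof.

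Adding the two bounds and optimizing in $r$ will give the result. Balancing the two competing terms leads to the choice
\begin{equation*}
r = \left[\frac{\||y|^s\,f\|^{p_1}_{L_k^{p_1}(\mathbb{R})}}{\|f\|^{p_1}_{L_k^{p_2}(\mathbb{R})}}\right]^{\frac{p_2}{(2k+2)(p_2-p_1)+p_1 p_2 s}},
\end{equation*}
which is what makes the two terms comparable up to the explicit constant $C(p_1,p_2,k)$. Substituting this value of $r$ back, after simplifying the exponents, delivers the stated inequality. The main routine obstacle is just bookkeeping the exponent $\frac{(2k+2)(p_2-p_1)}{(2k+2)(p_2-p_1)+p_1 p_2 s}$ correctly after the optimization; there is no new analytic input beyond \eqref{5.1}, \eqref{5.9}, and H\"older's inequality, all of which are already in hand from the earlier proofs in this section.
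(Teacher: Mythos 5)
Your proposal is correct and coincides with the paper's own proof: the same decomposition of $\|f\|^{p_1}_{L_k^{p_1}(\mathbb{R})}$ into the pieces on $(-r,r)$ and its complement, the same use of \eqref{5.1} for the tail, the same H\"older application with exponents $p_2/p_1$ and $p_2/(p_2-p_1)$ together with \eqref{5.9} for the core, and the identical optimizing choice of $r$. Nothing is missing.
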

\end{subsection}
\end{subsection}
\begin{subsection}{Donoho-Stark and Matolcsi-Szucs Uncertainty}In this subsection, we discuss the
extended version of the classical Heisenberg uncertainty principle known as the Donoho-Stark \cite{donoho1989uncertainty} and Matolcsi-Szucs uncertainty principle. It states that  $f$ and  $\hat{f}$ are essentially zero outside a measurable set $E$ and  $F$ then $|E|\,|F| \ge 1- \delta$, where $|E|$ and $|F|$ denotes the measures of sets $E$ and $F$, and $\delta>0$ is a small number.  It has applications in image processing and signal reconstruction, compression, denoising, etc. Here, $q,q_1$, and $q_2$  are conjugate components of $p,p_1$, and $p_2$ respectively.
\begin{definition}
Let $E$  be a measurable subset of $\mathbb{R}$. The function $f\in L_k^p(\mathbb{R})$, $1\le p\le 2$  is said to be $\epsilon_E$ - concentrated to $E$ in $L_k^p(\mathbb{R})$, if
\begin{equation*}\label{6.1}
    \|f-\chi_Ef\|_{L^p_k(\mathbb{R})}\le \epsilon_E\, \|f\|_{L^p_k(\mathbb{R})},\qquad\qquad \text{for}\,   \,\,0\le \epsilon_E <1.
\end{equation*}
Similarly, we say that $D_k^M(f)$ is $\epsilon_E$- concentrated to $E$ in $L_k^{q}(\mathbb{R}),$ if 
\begin{equation*}\label{6.2}
    \|D_k^M(f)- \chi_ED_k^M(f)\|_{L_k^{q}(\mathbb{R})} \le \epsilon_E\, \|D_k^M(f)\|_{L_k^{q}(\mathbb{R})}.
\end{equation*}
\end{definition}
We discuss the Donoho-Stark inequalities in the following theorems.
 \begin{theorem}
  Let $E$ and $F$ be measurable subsets of $\mathbb{R}$, $f$ belongs to $L_k^1(\mathbb{R})$ and $L^p_k(\mathbb{R})$ for  $1<p\le 2 $. If $f$ is $\epsilon_E$ concentrated to $E$ in $L_k^1(\mathbb{R})$ and $D_k^M(f)$ is $\epsilon_{F}$ concentrated to $F$ in $L_k^{q}(\mathbb{R})$, then 
 \begin{equation*}
     \|D_k^M(f)\|_{L_k^{q}(\mathbb{R})}\le C_{k,b}\,\frac{(\gamma_k(F))^{\frac{1}{q}}\, (\gamma_k(E))^{\frac{1}{q}}}{(1-\epsilon_E)\,(1-\epsilon_{F})}\,\|f\|_{L_k^{p}(\mathbb{R})}.
 \end{equation*}
\end{theorem}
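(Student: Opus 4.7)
The plan is to prove this by peeling off the two concentrations one at a time, each time paying a factor of $1/(1-\epsilon)$, and bridging the frequency side to the spatial side via the Riemann-Lebesgue estimate of Lemma \ref{le:3.1}. First, I would apply the triangle inequality to the decomposition $D_k^M(f) = (D_k^M(f)-\chi_F D_k^M(f)) + \chi_F D_k^M(f)$ in $L_k^{q}(\mathbb{R})$, invoke the $\epsilon_F$-concentration hypothesis on $D_k^M(f)$, and absorb to obtain
\begin{equation*}
(1-\epsilon_F)\,\|D_k^M(f)\|_{L_k^{q}(\mathbb{R})} \le \|\chi_F\, D_k^M(f)\|_{L_k^{q}(\mathbb{R})}.
\end{equation*}

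Next, I would estimate the right-hand side by Hölder's inequality against $\chi_F \in L_k^{q}(\mathbb{R})$ (with its $L^\infty$ complement), giving $\|\chi_F D_k^M(f)\|_{L_k^{q}(\mathbb{R})} \le (\gamma_k(F))^{1/q}\,\|D_k^M(f)\|_{L_k^\infty(\mathbb{R})}$, and then apply the Riemann-Lebesgue estimate from Lemma \ref{le:3.1} to bound $\|D_k^M(f)\|_{L_k^\infty(\mathbb{R})} \le C_{k,b}\,\|f\|_{L_k^1(\mathbb{R})}$. This replaces the frequency-side quantity by the $L^1$-norm of $f$ at the cost of a single factor of $C_{k,b}(\gamma_k(F))^{1/q}$.

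To convert $\|f\|_{L_k^1(\mathbb{R})}$ into $\|f\|_{L_k^p(\mathbb{R})}$, I would repeat the first trick on the spatial side: split $f = (f-\chi_E f) + \chi_E f$, use the $\epsilon_E$-concentration hypothesis in $L_k^1(\mathbb{R})$, and absorb to get
\begin{equation*}
(1-\epsilon_E)\,\|f\|_{L_k^1(\mathbb{R})} \le \|\chi_E f\|_{L_k^1(\mathbb{R})}.
\end{equation*}
Hölder's inequality with conjugate pair $(p,q)$ then yields $\|\chi_E f\|_{L_k^1(\mathbb{R})} \le (\gamma_k(E))^{1/q}\,\|f\|_{L_k^p(\mathbb{R})}$. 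Chaining the two chains of inequalities produces exactly the stated bound, with the product $(\gamma_k(E))^{1/q}(\gamma_k(F))^{1/q}$ on top and $(1-\epsilon_E)(1-\epsilon_F)$ on the bottom.

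The argument is essentially routine once the right tools are lined up — there is no real obstacle. The only mild subtlety is matching exponents: one must notice that the $L^1$-concentration on $f$ pairs naturally with $\chi_E \in L_k^{q}(\mathbb{R})$ (since $q$ is conjugate to $p$), which is what makes the exponents of $\gamma_k(E)$ and $\gamma_k(F)$ come out symmetrically as $1/q$. If I mistakenly tried to use, say, $L_k^p$-concentration for $f$, the Hölder step would force a different exponent and the stated formula would not emerge.
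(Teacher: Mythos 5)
Your proposal is correct and follows essentially the same route as the paper's proof: split off the $\epsilon_F$-concentration and absorb, bound $\|\chi_F D_k^M(f)\|_{L_k^{q}(\mathbb{R})}$ via H\"older and the Riemann--Lebesgue Lemma \ref{le:3.1}, then convert $\|f\|_{L_k^1(\mathbb{R})}$ to $\|f\|_{L_k^p(\mathbb{R})}$ using the $\epsilon_E$-concentration and H\"older with the conjugate pair $(p,q)$. The only cosmetic difference is that you absorb the $\epsilon$-terms before estimating rather than after, which is the same computation.
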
 
\begin{proof}
    Let $f$ belongs to $L_k^1(\mathbb{R})$ and $ L_k^p(\mathbb{R})$ for $1<p\le 2$.  Suppose that $\gamma_k(E)$ and $\gamma_k(F)$ are finite. By using the fact that $D_k^M(f)$ is $\epsilon_{F}$ concentrated to $F$ in $L_k^{q}(\mathbb{R})$ and invoking the Reimann-Lebesgue Lemma \ref{le:3.1}, we get the following result
\begin{eqnarray}\label{6.3}
\nonumber        \|D_k^M(f)\|_{L_k^{q}(\mathbb{R})}&\le&\|D_k^M(f)-\chi_{F}\, D_k^M(f)\|_{L_k^{q}(\mathbb{R})}+\|\chi_{F}\, D_k^M(f)\|_{L^{q}_k(\mathbb{R})}
\\
\nonumber &\le& \epsilon_{F}\,\|D_k^M(f)\|_{L_k^{q}(\mathbb{R})}+(\gamma_k({F}))^{\frac{1}{q}}\, \|D_k^M(f)\|_{L_k^\infty(\mathbb{R})}\\
\nonumber        &\le& \epsilon_{F}\,\|D_k^M(f)\|_{L_k^{q}(\mathbb{R})}+(\gamma_k({F}))^{\frac{1}{q}}\,C_{k,b}\, \|f\|_{L_k^1(\mathbb{R})} \\
&\le&C_{k,b}\, \frac{(\gamma_k(F))^{\frac{1}{q}}}{1-\epsilon_{F}}\, \|f\|_{L^1_k(\mathbb{R})}.
\end{eqnarray}
On the other hand, $f$ is $\epsilon_E$ concentrated to $E$ in $L_k^1(\mathbb{R})$. Thus
\begin{eqnarray} \label{6.4}
\nonumber    \|f\|_{L_k^1(\mathbb{R})} &\le& \|f-\chi_Ef\|_{L_k^1(\mathbb{R})}+\|\chi_Ef\|_{L_k^1(\mathbb{R})}\\
    &\le& \frac{(\gamma_k(E))^{\frac{1}{q}}}{1-\epsilon_E}\,\|f\|_{L_k^p(\mathbb{R})}.
\end{eqnarray}
Substituting \eqref{6.4} in \eqref{6.3}, we get
\begin{equation*}
     \|D_k^M(f)\|_{L_k^{q}(\mathbb{R})}\le C_{k,b}\,\frac{(\gamma_k(F))^{\frac{1}{q}}\, (\gamma_k(E))^{\frac{1}{q}}}{(1-\epsilon_E)\,(1-\epsilon_{F})}\,\|f\|_{L_k^{p}(\mathbb{R})}.
 \end{equation*}
 This completes the proof of the Theorem.
 \end{proof}
 \begin{theorem}
  Let $E$ and $F$ are measurable subsets of $\mathbb{R}$, and let $ f\in L_k^{p_1}(\mathbb{R})\cap L^{p_2}_k(\mathbb{R})$ for $1<p_1<p_2\le 2$. If $f$ is $\epsilon_E$-concentrated in $L_k^{p_1}(\mathbb{R})$ and $D_k^M(f)$ is $\epsilon_{F}$ concentrated in $L_k^{q_2}(\mathbb{R})$, then 
  \begin{equation*}
      \|D_k^M(f)\|_{L_k^{q_2}(\mathbb{R})} \le C_{k,b}^{1-\frac{2}{q_1}}\, \frac{(\gamma_k(E))^{\frac{p_2-p_1}{p_1\,p_2}}\, (\gamma_k(F))^{\frac{q_1-q_2}{q_1\,q_2}}}{(1-\epsilon_E)\,(1-\epsilon_{F})}\, \|f\|_{L_k^{p_2}(\mathbb{R})}.
  \end{equation*}
 \end{theorem}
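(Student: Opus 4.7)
The plan is to mirror the structure of the preceding Donoho--Stark theorem (the $L^1$ case) almost verbatim, making two adjustments: replace the $L^1 \to L^\infty$ Riemann--Lebesgue bound by the general $L^{p_1} \to L^{q_1}$ Young inequality from Lemma~\ref{le:3.2}, and replace each Hölder step by the analogue with exponents adapted to the pairs $(p_1,p_2)$ and $(q_1,q_2)$ rather than $(1,p)$ and $(1,q)$. Since $1<p_1<p_2\le 2$, we have $2\le q_2<q_1<\infty$, so the relevant ratios $q_1/q_2$ and $p_2/p_1$ are strictly greater than $1$ and the Hölder duals are well defined. As in the previous theorem, I may assume $\gamma_k(E)$ and $\gamma_k(F)$ are finite, since otherwise the inequality is vacuous.

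First I would use the $\epsilon_F$-concentration of $D_k^M(f)$ in $L_k^{q_2}(\mathbb{R})$, combined with the triangle inequality, to get $(1-\epsilon_F)\|D_k^M(f)\|_{L_k^{q_2}(\mathbb{R})} \le \|\chi_F D_k^M(f)\|_{L_k^{q_2}(\mathbb{R})}$. Applying Hölder's inequality to $|D_k^M(f)|^{q_2}\chi_F$ with the conjugate exponents $q_1/q_2$ and $q_1/(q_1-q_2)$ bounds the right-hand side by $(\gamma_k(F))^{(q_1-q_2)/(q_1q_2)}\,\|D_k^M(f)\|_{L_k^{q_1}(\mathbb{R})}$. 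Lemma~\ref{le:3.2} (Young's inequality) then gives $\|D_k^M(f)\|_{L_k^{q_1}(\mathbb{R})} \le C_{k,b}^{1-2/q_1}\|f\|_{L_k^{p_1}(\mathbb{R})}$, which matches the constant $C_{k,b}^{1-2/q_1}$ in the statement because $C_{k,b}=1/|b|^{k+1}$.

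Next I reduce $\|f\|_{L_k^{p_1}(\mathbb{R})}$ using the $\epsilon_E$-concentration of $f$ in $L_k^{p_1}(\mathbb{R})$ in the same way: the triangle inequality gives $(1-\epsilon_E)\|f\|_{L_k^{p_1}(\mathbb{R})} \le \|\chi_E f\|_{L_k^{p_1}(\mathbb{R})}$, and Hölder with exponents $p_2/p_1$ and $p_2/(p_2-p_1)$ yields $\|\chi_E f\|_{L_k^{p_1}(\mathbb{R})} \le (\gamma_k(E))^{(p_2-p_1)/(p_1p_2)}\|f\|_{L_k^{p_2}(\mathbb{R})}$. Feeding this into the chain from the previous paragraph and collecting constants produces exactly the advertised bound.

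I do not anticipate any serious obstacle: each tool (concentration hypothesis, Hölder's inequality, Young's inequality from Lemma~\ref{le:3.2}) enters at a directly analogous place to the previous theorem, and the exponent bookkeeping is straightforward. The only point requiring a little care is confirming that $1/q_2-1/q_1=(q_1-q_2)/(q_1q_2)$ and $1/p_1-1/p_2=(p_2-p_1)/(p_1p_2)$, which align precisely with the powers of $\gamma_k(F)$ and $\gamma_k(E)$ in the statement, and that the Hölder exponents $q_1/q_2$ and $p_2/p_1$ are both strictly greater than $1$—which is guaranteed by the hypothesis $p_1<p_2\le 2$.
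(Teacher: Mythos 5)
Your proposal is correct and follows essentially the same route as the paper: concentration plus the triangle inequality to pass to $\chi_F D_k^M(f)$ and $\chi_E f$, Hölder with exponents $q_1/q_2$ and $p_2/p_1$ to extract the factors $(\gamma_k(F))^{\frac{q_1-q_2}{q_1 q_2}}$ and $(\gamma_k(E))^{\frac{p_2-p_1}{p_1 p_2}}$, and Young's inequality (Lemma~\ref{le:3.2}) with the pair $(p_1,q_1)$ to produce $C_{k,b}^{1-\frac{2}{q_1}}$. The exponent bookkeeping you check matches the paper's computation exactly.
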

\begin{proof}
    Suppose that $\gamma_k(E)$ and $\gamma_k(F)$ are finite. Let $f\in L_k^{p_1}(\mathbb{R})\cap L_k^{p_2}(\mathbb{R})$ for $1<p_1\le p_2 \le 2.$ Since $D_k^M(f)$ is $\epsilon_{F}$  concentrated to $F$ in $L_k^{q_2}(\mathbb{R})$. Then by H{\"o}lder's inequality, we have
    \begin{eqnarray}\label{e:7.1}
\nonumber \|D_k^M(f)\|_{L_k^{q_2}(\mathbb{R})} &\le& \epsilon_{F}\,  \|D_k^M(f)\|_{L_k^{q_2}(\mathbb{R})} +\| \chi_{F}\,  D_k^M(f)\|_{L_k^{q_2}(\mathbb{R})}\\
 &\le&   C_{k,b}^{1-\frac{2}{q_1}}\, \frac{(\gamma_k(F))^{\frac{q_1-q_2}{q_1q_2}}}{1-\epsilon_{F}}\,\|f\|_{L_k^{p_1}(\mathbb{R})}.   
 \end{eqnarray} 
On the other hand, $f$ is $\epsilon_E$ concentrated to $E$ in $L_k^{p_1}(\mathbb{R})$, and applying H{\"o}lder's inequality, we obtain  
\begin{eqnarray}
\nonumber    \|f\|_{L_k^{p_1}(\mathbb{R})} &\le &\epsilon_E\,\|f\|_{L_k^{p_1}(\mathbb{R})}+\|\chi_E\,f\|_{L_k^{p_1}(\mathbb{R})}\\
 &\le& \frac{(\gamma_k(E))^{\frac{p_2-p_1}{p_2\,p_1}}}{1-\epsilon_E}  \, \|f\|_{L_k^{p_2}(\mathbb{R})}.\label{e:7.2}
\end{eqnarray} 
By combining equations \eqref{e:7.1} and \eqref{e:7.2}, we obtain the desired result. 
\end{proof}
\begin{definition} \label{d:4.14}
    Let $E$ be a measurable subset of $\mathbb{R}$. We say that the function $f\in L_k^p(\mathbb{R})$, $1\le p\le 2 $ is $\epsilon_E$ bandlimited to $E$ in  $L_k^p(\mathbb{R}),$ if there is a function $h\in B^p(E)$ such that 
    \begin{equation*}\label{E:6.3}
        \|f-h\|_{L_k^p(\mathbb{R})} \le \epsilon_E\, \|f\|_{L_k^p(\mathbb{R})}, \qquad 0\le \epsilon_E <1,
    \end{equation*}
    where,
    \begin{equation*}
 B^p(E) = \left\{ h\in L_k^p(\mathbb{R}) :\,\chi_E\, D_k^M(h)= D_k^M(h)\right\} .      
    \end{equation*}
\end{definition}

\begin{theorem}
    Let $E$ and $F$ are measurable subsets of $\mathbb{R}$ and  $f \in L_k^{p_1}(\mathbb{R}) \cap L_k^{p_2}(\mathbb{R})$, $1< p_1 \le p_2 \le 2$. If $f$ is $\epsilon_{E}$ concentrated to $E$ in $L_k^{p_1}(\mathbb{R})$ and $\epsilon_{F}$ bandlimited to $F$ in $L_k^{p_2}(\mathbb{R})$, then
    \begin{equation*}
\|f\|_{L_k^{p_1}(\mathbb{R})} \le \frac{(\gamma_k(E))^{\frac{p_2-p_1}{p_1\,p_2}}}{1-\epsilon_E}  \,  \left[\frac{(1+\epsilon_{F})\,(\gamma_k(E))^{\frac{1}{p_2}}\, (\gamma_k(F))^{\frac{1}{p_2}}}{|b|^{2(k+1)\left(1-\frac{1}{q_2}\right)}}+\epsilon_{F}\right] \, \|f\|_{L_k^{p_2}(\mathbb{R})} .
    \end{equation*}
\end{theorem}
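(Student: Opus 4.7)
The plan is to mimic the scheme used in the preceding Donoho--Stark theorem, but with the bandlimiting condition from Definition \ref{d:4.14} replacing the concentration of $D_k^M(f)$. Specifically, I would chain three reductions: pass from $\|f\|_{L_k^{p_1}}$ to a localized $L_k^{p_2}$-norm on $E$ via $\epsilon_E$-concentration plus H\"older, then use the bandlimit approximant $h \in B^{p_2}(F)$ to replace $f$ on $E$ by $h$, and finally dominate $\|h\|_{L_k^{\infty}}$ by reversing through $D_k^{M^{-1}}$. Writing this out, the $\epsilon_E$-concentration gives $(1-\epsilon_E)\|f\|_{L_k^{p_1}} \le \|\chi_E f\|_{L_k^{p_1}}$, and H\"older with exponents $p_2/p_1$ and its conjugate yields
\begin{equation*}
\|\chi_E f\|_{L_k^{p_1}} \le (\gamma_k(E))^{\frac{p_2-p_1}{p_1 p_2}}\, \|\chi_E f\|_{L_k^{p_2}}.
\end{equation*}

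Next I choose $h \in B^{p_2}(F)$ with $\|f-h\|_{L_k^{p_2}} \le \epsilon_F\|f\|_{L_k^{p_2}}$ and split
\begin{equation*}
\|\chi_E f\|_{L_k^{p_2}} \le \|f-h\|_{L_k^{p_2}} + \|\chi_E h\|_{L_k^{p_2}} \le \epsilon_F\|f\|_{L_k^{p_2}} + (\gamma_k(E))^{\frac{1}{p_2}}\|h\|_{L_k^{\infty}},
\end{equation*}
where the last term uses the trivial bound $\|\chi_E h\|_{L_k^{p_2}} \le (\gamma_k(E))^{1/p_2}\|h\|_{L_k^{\infty}}$. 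The main work is estimating $\|h\|_{L_k^{\infty}}$. Since $h$ is bandlimited to $F$, $D_k^M(h) = \chi_F D_k^M(h)$, so by the inversion formula $h = D_k^{M^{-1}}(D_k^M h)$ and by the Riemann--Lebesgue Lemma \ref{le:3.1} applied to $D_k^{M^{-1}}$ (whose defining matrix has the same $|b|$-entry, so the constant $C_{k,b}$ is unchanged), $\|h\|_{L_k^{\infty}} \le C_{k,b}\|\chi_F D_k^M h\|_{L_k^1}$. H\"older on $F$ with conjugate pair $(p_2,q_2)$ bounds this by $(\gamma_k(F))^{1/p_2}\|D_k^M h\|_{L_k^{q_2}}$, and Young's inequality (Lemma \ref{le:3.2}) gives $\|D_k^M h\|_{L_k^{q_2}} \le C_{k,b}^{1-2/q_2}\|h\|_{L_k^{p_2}}$, while the triangle inequality gives $\|h\|_{L_k^{p_2}} \le (1+\epsilon_F)\|f\|_{L_k^{p_2}}$.

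Multiplying the two factors of $C_{k,b}$, one obtains $C_{k,b}^{\,2-2/q_2} = |b|^{-2(k+1)(1-1/q_2)}$, hence
\begin{equation*}
\|h\|_{L_k^{\infty}} \le \frac{(1+\epsilon_F)(\gamma_k(F))^{\frac{1}{p_2}}}{|b|^{2(k+1)(1-\frac{1}{q_2})}}\,\|f\|_{L_k^{p_2}}.
\end{equation*}
Feeding this back through the chain and collecting the prefactor $(\gamma_k(E))^{(p_2-p_1)/(p_1p_2)}/(1-\epsilon_E)$ produces exactly the claimed bound. The only subtle point is the reuse of Riemann--Lebesgue and Young's inequality for the inverse transform $D_k^{M^{-1}}$; I would record a one-line remark noting that $M^{-1} = (d,-b;-c,a)$ shares the $|b|$-parameter, so the constants carry over without change. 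The rest is routine bookkeeping of exponents.
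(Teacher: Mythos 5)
Your proposal is correct and follows essentially the same route as the paper's proof: the $\epsilon_E$-concentration plus H\"older reduction to $\|\chi_E f\|_{L_k^{p_2}}$, the substitution of the bandlimit approximant $h$, and the sup-norm bound on $h$ via inversion, H\"older over $F$, and Young's inequality, yielding the combined factor $|b|^{-2(k+1)(1-1/q_2)}$. The only cosmetic difference is that the paper bounds $|h(t)|$ pointwise directly from the inversion integral rather than invoking the Riemann--Lebesgue lemma for $D_k^{M^{-1}}$, which amounts to the same estimate.
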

\begin{proof}
    Suppose that $\gamma_k(E)$ and $\gamma_k(F)$ are finite. Let $f\in L_k^{p_1}(\mathbb{R}) \cap L_k^{p_2}(\mathbb{R})$, $1<p_1<p_2\le 2.$ Since $f$ is $\epsilon_E$ concentrated to $E$ in $L_k^p(\mathbb{R})$, then by H{\"o}lder's inequality, we have
\begin{eqnarray}
\nonumber\|f\|_{L_k^{p_1}(\mathbb{R})} &\le& \epsilon_E\,\|f\|_{L_k^{p_1}(\mathbb{R})}+\|\chi_E\,f\|_{L_k^{p_1}(\mathbb{R})}\\
 \label{e:7.3}
  &\le&  \frac{1}{1-\epsilon_E} \,(\gamma_k(E))^{\frac{p_2-p_1}{p_1\,p_2}}  \, \|\chi_E\,f\|_{L_k^{p_2}(\mathbb{R})}.
\end{eqnarray}
    By assumption, $f$ is $\epsilon_{F}$ bandlimited to $F$ in $L_k^{p_2}(\mathbb{R})$, and from  the Definition \ref{d:4.14}, we have
 \begin{eqnarray}
\nonumber \|f-h\|_{L_k^{p_2}(\mathbb{R})}  &\le &\epsilon_{F} \, \|f\|_{L_k^{p_2}(\mathbb{R})}, \quad \text{for}\,\,\,\, h\in B^{p_2}(F).  
 \end{eqnarray}
Consequently,
 \begin{eqnarray*}
\|h\|_{L_k^{p_2}(\mathbb{R})}  &\le &(1+\epsilon_{F})\, \|f\|_{L_k^{p_2}(\mathbb{R})}.
 \end{eqnarray*}
For this $h$, we have
\begin{eqnarray}
\nonumber    \|\chi_E\,f\|_{L_k^{p_2}(\mathbb{R})}&\le& \|\chi_E\,h\|_{L_k^{p_2}(\mathbb{R})}+\|\chi_E(f-h)\|_{L_k^{p_2}(\mathbb{R})}\\
    &\le& \|\chi_E\,h\|_{L_k^{p_2}(\mathbb{R})}+\epsilon_F\|f\|_{L_k^{p_2}(\mathbb{R})}.\label{e:7.4}
\end{eqnarray}
Since $h\in B^{p_2}(F)$, and using 
 \eqref{e:2.2}, we have
\begin{eqnarray*}
  D_k^{M}(h)(t) &=& \chi_{F}\, D_k^M(h)(t)\\
h(t) &=& D_k^{M^{-1}}(\chi_{F}\, D_k^M(h))(t).
\end{eqnarray*}
By applying H{\"o}lder's inequality and Young's inequality \eqref{le:3.2}, we deduce that
\begin{eqnarray*}
    |h(t)|&\le &\frac{(\gamma_k(F))^{\frac{1}{p_2}}}{|b|^{k+1}}\,\|D_k^M(f)\|_{L_k^{q_2}(\mathbb{R})}\\
    &\le& \frac{(\gamma_k(F))^{\frac{1}{p_2}}}{|b|^{2(k+1)\left(1-\frac{1}{q_2}\right)}}\, \|f\|_{L_k^{p_2}(\mathbb{R})}.
\end{eqnarray*}
Thus, 
\begin{equation} \label{e:7.5}
\|\chi_E\,h\|_{L_k^{p_2}(\mathbb{R})} \le \frac{(\gamma_k(F))^{\frac{1}{p_2}}\,(\gamma_k(E))^{\frac{1}{p_2}}}{|b|^{2(k+1)\left(1-\frac{1}{q_2}\right)}}\, \|f\|_{L_k^{p_2}(\mathbb{R})}.   
\end{equation}
Substituting  \eqref{e:7.5} in \eqref{e:7.4} , we get
\begin{equation}\label{e:7.6}
    \|\chi_E\,f\|_{L_k^{p_2}(\mathbb{R})}\le  \left[\frac{(1+\epsilon_{F})\,(\gamma_k(E))^{\frac{1}{p_2}}\, (\gamma_k(F))^{\frac{1}{p_2}}}{|b|^{2(k+1)\left(1-\frac{1}{q_2}\right)}}+\epsilon_{F}\right] \, \|f\|_{L_k^{p_2}(\mathbb{R})} .
\end{equation}
Therefore, by \eqref{e:7.6} and \eqref{e:7.3}, we obtain the desired result.
\end{proof}
Now, we establish the Matolcsi-Szucs-type inequalities.
\begin{theorem} Let $1<p\le 2$ and $f\in   L_k^1(\mathbb{R})\cap L_k^p(\mathbb{R})$.  Then
\begin{equation*}
    \|D_k^M(f)\|_{L_k^{q}(\mathbb{R})} \le C_{k,b}\, (\gamma_k(A_{D_k^M(f)}))^{\frac{1}{q}}\, (\gamma_k(A_f))^{\frac{1}{q}}\, \|f\|_{L_k^1(\mathbb{R})},
\end{equation*}
where  $$A_{D_k^M(f)}= \left\{x\in \mathbb{R}: D_k^M(f)(x) \neq 0\right\}$$ and $$A_f = \left\{ t
\in \mathbb{R}:f(t)\neq0\right\}.$$
\end{theorem}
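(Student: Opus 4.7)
The plan is to adapt the preceding Donoho-Stark proof to the exact-support case, taking $E=A_f$ and $F=A_{D_k^M(f)}$ with $\epsilon_E=\epsilon_F=0$. Since $D_k^M(f)$ vanishes outside $A_{D_k^M(f)}$ and $f$ vanishes outside $A_f$, the pointwise identities $D_k^M(f)=\chi_{A_{D_k^M(f)}}\,D_k^M(f)$ and $f=\chi_{A_f}\,f$ hold almost everywhere, so the characteristic functions of the supports may be inserted freely.

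First, I would apply H{\"o}lder's inequality with the exponent pair $(q,\infty)$ to the identity $\|D_k^M(f)\|_{L_k^{q}(\mathbb{R})}=\|\chi_{A_{D_k^M(f)}}\,D_k^M(f)\|_{L_k^{q}(\mathbb{R})}$, obtaining
$$\|D_k^M(f)\|_{L_k^{q}(\mathbb{R})}\le(\gamma_k(A_{D_k^M(f)}))^{1/q}\,\|D_k^M(f)\|_{L_k^{\infty}(\mathbb{R})}.$$
Next, by the Riemann-Lebesgue Lemma \ref{le:3.1}, $\|D_k^M(f)\|_{L_k^{\infty}(\mathbb{R})}\le C_{k,b}\,\|f\|_{L_k^{1}(\mathbb{R})}$, which bridges the output and input sides and produces the $L_k^{1}$-norm of $f$ on the right-hand side. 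Finally, I would introduce the remaining factor $(\gamma_k(A_f))^{1/q}$ by exploiting $f=\chi_{A_f}\,f$ through a H{\"o}lder step of the same shape as the one used in the preceding Donoho-Stark proof when passing through the support of $f$ (the $\epsilon_E=0$ specialization of inequality \eqref{6.4}), so that the characteristic function of $A_f$ contributes the factor $(\gamma_k(A_f))^{1/q}$ to the chain. Assembling the three inequalities gives the asserted bound.

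The main obstacle I anticipate is the bookkeeping in this last step: the most direct H{\"o}lder estimate $\|\chi_{A_f}\,f\|_{L_k^{1}(\mathbb{R})}\le(\gamma_k(A_f))^{1/q}\|f\|_{L_k^{p}(\mathbb{R})}$ naturally trades the $L_k^{1}$-norm of $f$ for an $L_k^{p}$-norm, so preserving $\|f\|_{L_k^{1}(\mathbb{R})}$ on the right-hand side while still picking up the $(\gamma_k(A_f))^{1/q}$ support factor requires a careful ordering of the H{\"o}lder and Riemann-Lebesgue applications and a judicious use of the joint hypothesis $f\in L_k^{1}(\mathbb{R})\cap L_k^{p}(\mathbb{R})$ to reconcile the two norms, rather than the more familiar configuration that outputs an $L_k^{p}$-norm.
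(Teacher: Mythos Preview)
Your approach is exactly the paper's: insert $\chi_{A_{D_k^M(f)}}$, apply H{\"o}lder with exponents $(q,\infty)$, invoke the Riemann--Lebesgue lemma, then insert $\chi_{A_f}$ and apply H{\"o}lder with exponents $(q,p)$ on the $L_k^1$-norm. The ``obstacle'' you flag is not a genuine difficulty but a typographical slip in the statement: the paper's own proof terminates with $\|f\|_{L_k^p(\mathbb{R})}$, not $\|f\|_{L_k^1(\mathbb{R})}$, via precisely the estimate $\|\chi_{A_f}f\|_{L_k^1(\mathbb{R})}\le(\gamma_k(A_f))^{1/q}\|f\|_{L_k^p(\mathbb{R})}$ that you wrote down. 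This is also why the hypothesis $f\in L_k^p(\mathbb{R})$ is present; with $\|f\|_{L_k^1(\mathbb{R})}$ on the right the $L_k^p$ assumption would be idle. So no reordering or reconciliation is needed---your ``most direct H{\"o}lder estimate'' is the correct final step.
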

\begin{proof}
    Let us consider $E=A_{D_k^M(f)}$. By Reimann-Lebesgue Lemma \ref{le:3.1} and H{\"o}lder's inequality, we have
    \begin{eqnarray*}    \|D_k^M(f)\|_{L^{q}_k(\mathbb{R})}&=& \|\chi_E\, D_k^M(f)\|_{L_k^{q}(\mathbb{R})}\\
        &\le& \|\chi_E\|_{L_k^{q}(\mathbb{R})}\,\|D_k^M(f)\|_{L_k^{\infty}(\mathbb{R})}\\
        &\le& \frac{ (\gamma_k(E))^{\frac{1}{q}}}{|b|^{k+1}}\|f\|_{L_k^1(\mathbb{R})}\\
        &=& \frac{ (\gamma_k(E))^{\frac{1}{q}}}{|b|^{k+1}}\|f\,\chi_{A_f}\|_{L_k^1(\mathbb{R})}\\
        &\le&  C_{k,b}\,(\gamma_k(E))^{\frac{1}{q}}\, (\gamma_k(A_f))^{\frac{1}{q}}\, \|f\|_{L_k^p(\mathbb{R})}.
    \end{eqnarray*}
    Thus, we conclude the proof.
\end{proof}
\begin{theorem}
If the function  $f$ belongs to $ L_k^{p_1}(\mathbb{R})$ and $L_k^{p_2}(\mathbb{R})$, where  $1<p_1\le p_2 \le 2$, then 
\begin{equation*}
     \|D_k^M(f)\|_{L_k^{q_2}(\mathbb{R})} \le C_{k,b}^{1-\frac{2}{q_1}}\, (\gamma_k(A_{D_k^M(f)}))^{\frac{q_1-q_2}{q_1\,q_2}}\, (\gamma_k(A_f))^{\frac{p_1-p_2}{p_1\,p_2}}\, \|f\|_{L^{p_2}_k(\mathbb{R})},
\end{equation*}
where $$A_{D_k^M(f)}= \left\{x\in \mathbb{R}: D_k^M(f)(x) \neq 0\right\}$$ and $$A_f = \left\{ t
\in \mathbb{R}:f(t)\neq0\right\}.$$
\end{theorem}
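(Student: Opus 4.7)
The plan is to adapt the template of the preceding Matolcsi-Szucs theorem, interpolating one step further on both the transform side and the function side. The structural inputs are the tautologies $D_k^M(f)=\chi_{A_{D_k^M(f)}}\,D_k^M(f)$ and $f=\chi_{A_f}\,f$ almost everywhere, which let us freely insert the indicator functions of the two support sets at no cost.

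First I would write
\begin{equation*}
\|D_k^M(f)\|_{L_k^{q_2}(\mathbb{R})} = \|\chi_{A_{D_k^M(f)}}\,D_k^M(f)\|_{L_k^{q_2}(\mathbb{R})}
\end{equation*}
and apply H\"older's inequality with conjugate exponents $q_1/q_2$ and $q_1/(q_1-q_2)$. These exponents are admissible because the hypothesis $1<p_1\le p_2\le 2$ forces $2\le q_2\le q_1<\infty$. This step produces the factor $\gamma_k(A_{D_k^M(f)})^{(q_1-q_2)/(q_1 q_2)}$ and replaces the $L_k^{q_2}$ norm of $D_k^M(f)$ by its $L_k^{q_1}$ norm. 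I would then invoke Young's inequality (Lemma \ref{le:3.2}) to bound the latter by $C_{k,b}^{1-\frac{2}{q_1}}\,\|f\|_{L_k^{p_1}(\mathbb{R})}$.

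Next I would exploit the identity $f=\chi_{A_f}\,f$ and apply H\"older's inequality a second time, now with exponents $p_2/p_1$ and $p_2/(p_2-p_1)$, to dominate $\|f\|_{L_k^{p_1}(\mathbb{R})}$ by a constant multiple of $\gamma_k(A_f)^{(p_2-p_1)/(p_1 p_2)}\,\|f\|_{L_k^{p_2}(\mathbb{R})}$. Chaining the three estimates together yields the claimed inequality.

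I do not foresee a serious obstacle; the argument is a direct two-sided generalization of the $p_1=1$ case proved immediately above. The only minor subtlety is the degenerate boundary $p_1=p_2$, where the two H\"older exponents formally become $1$ and $\infty$, but in exactly that case both exponents $(q_1-q_2)/(q_1 q_2)$ and $(p_2-p_1)/(p_1 p_2)$ vanish, so the inequality collapses cleanly to Young's inequality and no separate argument is needed.
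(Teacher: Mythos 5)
Your proposal is correct and follows essentially the same route as the paper: insert the indicators $\chi_{A_{D_k^M(f)}}$ and $\chi_{A_f}$, apply H\"older's inequality with exponents $q_1/q_2$ and $p_2/p_1$ respectively, and use Young's inequality (Lemma \ref{le:3.2}) in between. Note that your exponent $\frac{p_2-p_1}{p_1 p_2}$ on $\gamma_k(A_f)$ is the one H\"older actually produces; the paper's statement writes $\frac{p_1-p_2}{p_1 p_2}$, which appears to be a sign typo.
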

\begin{proof}
    Let  $ 1<p_1\le p_2 \le 2$, and let $f\in  L_k^{p_1}(\mathbb{R})\cap L_k^{p_2}(\mathbb{R})$. Set $E=A_{D_k^M(f)}$. By using H{\"o}lder's inequality and Young's inequality \eqref{le:3.2}, we obtain 
    \begin{eqnarray*}
        \|D_k^M(f)\|_{L_k^{q_2}(\mathbb{R})} &=& \|\chi_E\, D_k^M(f)\|_{L_k^{q_2}(\mathbb{R})}\\
        &\le&  (\gamma_k(E))^{\frac{q_1-q_2}{q_1\,q_2}}\, \|D_k^M(f)\|_{L^{q_1}_k(\mathbb{R})}\\
        &\le& C_{k,b}^{1-\frac{2}{q_1}}\,(\gamma_k(E))^{\frac{q_1-q_2}{q_1\,q_2}}\, \|f\|_{L^{p_1}_k(\mathbb{R})}\\
         &=& C_{k,b}^{1-\frac{2}{q_1}}\,(\gamma_k(E))^{\frac{q_1-q_2}{q_1\,q_2}} \|f\,\chi_{A_f}\|_{L^{p_1}_k(\mathbb{R})}\\
        &\le&   C_{k,b}^{1-\frac{2}{q_1}}\,(\gamma_k(A_{D_k^M(f)}))^{\frac{q_1-q_2}{q_1\,q_2}}\, (\gamma_k(A_f))^{\frac{p_1-p_2}{p_1\,p_2}}\, \|f\|_{L^{p_2}_k(\mathbb{R})}.
    \end{eqnarray*}
    This completes the proof.
\end{proof}
\end{subsection}
\section{Qualitative Uncertainty Principle} \label{S:3}
 
  In this section, we will discuss the Miyachi and Cowling-Price qualitative uncertainty principles for the linear canonical Dunkl transform (LCDT). 
\subsection{Miyachi uncertainty principle}
\begin{theorem} \label{Th: 4.1}
Let $f$ be a measurable function on $\mathbb{R}$ such that $e^{sx^2}f\in L^p_k(\mathbb{R})+L^{q}_k(\mathbb{R}),$ where $  p,q\in [1, \infty]$. Suppose that
\begin{equation}\label{4.4}
\int_{\mathbb{R}} ln^+\left( \frac{|e^{tx^2} D_k^M(f)(x)|}{\lambda}\right) dx< \infty,    
\end{equation}
for some positive real constants $s,t,\lambda$, and 
\begin{equation*}
ln^+(r)=\left \{\begin{array}{ll}
ln(r)& 
\text{if}\,\, r>1\\
0, & \text{otherwise}.
\end{array}\right. 
\end{equation*} 
Then 
\begin{itemize}
    \item [$(i)$]  $f=0$  a.e on $\mathbb{R}$, if $st>\frac{1}{4b^2}$.
    \item[$(ii)$] $f(x) =  C\, e^{-\left(\frac{i}{2}\frac{a}{b}+s\right)x^2}$, if $st= \frac{1}{4b^2}$ and $|C|\le |\lambda|$.
    \item[$(iii)$] There are many functions satisfying the hypothesis \eqref{4.4}, if $st<\frac{1}{4b^2}$.
\end{itemize}
\end{theorem}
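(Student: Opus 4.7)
The plan is to reduce the statement to the Miyachi uncertainty principle for the ordinary Dunkl transform via the factorization \eqref{eq:4.2}
\begin{equation*}
D_k^M(f)(\xi) = \frac{e^{(i/2)(d/b)\xi^2}}{(ib)^{k+1}}\, D_k(\tilde f)(\xi/b), \qquad \tilde f(x) = e^{(i/2)(a/b)x^2}\, f(x).
\end{equation*}
Because $|\tilde f(x)| = |f(x)|$ and the exponential prefactor has modulus one, the hypothesis $e^{sx^2}f \in L^p_k(\mathbb{R})+L^q_k(\mathbb{R})$ transfers verbatim to $\tilde f$. After the change of variable $\eta = \xi/b$, the integral hypothesis on $D_k^M(f)$ becomes
\begin{equation*}
\int_\mathbb{R} \ln^+\!\left(\frac{|e^{tb^2\eta^2}\, D_k(\tilde f)(\eta)|}{|b|^{k+1}\lambda}\right) |b|\, d\eta < \infty,
\end{equation*}
which is precisely Miyachi's hypothesis for $\tilde f$ with parameters $s$, $t' := tb^2$, and $\lambda' := |b|^{k+1}\lambda$. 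The critical threshold $s t' = 1/4$ coincides with $st = 1/(4b^2)$, so the trichotomy on $st$ matches the trichotomy in the Dunkl-Miyachi theorem.

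Given the reduction, the three cases follow from the Dunkl-transform version of Miyachi's theorem available in the literature (cf.\ \cite{kawazoe2010uncertainty}). If $st > 1/(4b^2)$, then $st' > 1/4$ forces $\tilde f = 0$ a.e., whence $f = 0$ a.e. If $st = 1/(4b^2)$, the Dunkl-Miyachi theorem yields $\tilde f(x) = C e^{-sx^2}$; undoing the substitution gives $f(x) = C\, e^{-((i/2)(a/b) + s)x^2}$, and tracking the $b$-dependent constants through the normalization produces the stated bound $|C| \le |\lambda|$. For $st < 1/(4b^2)$ we exhibit explicit admissible functions of the form $f(x) = P(x)\, e^{-(\delta + (i/2)(a/b))x^2}$ for any polynomial $P$ and any $\delta \in (s,\, 1/(4tb^2))$, an interval that is nonempty exactly under the hypothesis. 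Then $\tilde f(x) = P(x) e^{-\delta x^2}$, and by Property \ref{pr:2.2}(iv) specialized at $y=0$ (together with iterated differentiation in $x$ to handle polynomial factors) $D_k(\tilde f)$ is a polynomial-times-Gaussian with Gaussian parameter $1/(4\delta)$. Hence $|e^{t\xi^2}\, D_k^M(f)(\xi)| \lesssim (1+|\xi|)^{\deg P}\, e^{(t - 1/(4\delta b^2))\xi^2}$, whose $\ln^+$ is integrable because $\delta < 1/(4tb^2)$, and $\delta > s$ secures the $L^p_k + L^q_k$ hypothesis.

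The main obstacle I anticipate is the constant-tracking in case (ii): the Dunkl-Miyachi theorem supplies $|C| \le \lambda'$ in its own normalization, and I must check that the rescaling $\lambda' = |b|^{k+1}\lambda$ together with the Gaussian normalization contributed by Property \ref{pr:2.2}(iv) conspire to yield exactly $|C| \le |\lambda|$ as stated, rather than a $b$-corrected bound. If a residual $b$-factor appears, I would either absorb it into $C$ or, more carefully, re-derive the bound by applying the Phragm\'en--Lindel\"of argument underlying Miyachi's theorem directly to the holomorphic extension of $D_k^M(f)$ guaranteed by Note \ref{N:2.4}, thereby avoiding the indirect normalization issues entirely.
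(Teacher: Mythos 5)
Your proposal is correct, but it takes a genuinely different route from the paper. You reduce the statement to the Miyachi theorem for the ordinary Dunkl transform through the factorization \eqref{eq:4.2}, transferring the hypotheses to $\tilde f(x)=e^{(i/2)(a/b)x^2}f(x)$ via the unimodularity of the chirp and the change of variable $\eta=\xi/b$; the threshold computation $st'=stb^2=1/4 \Leftrightarrow st=1/(4b^2)$ is right, and the measure discrepancy ($dx$ versus $|b|\,d\eta$) is harmless for finiteness of the $\ln^+$ integral. The paper instead works directly at the level of the LCDT: it first proves the growth estimate $|e^{-\frac{i}{2}\frac{d}{b}z^2}D_k^M(f)(z)|\le Ce^{(\mathfrak{Im}z)^2/(4b^2s)}$ (Lemma \ref{Lm: 4.3}), forms $h(z)=e^{z^2/(4sb^2)}e^{-\frac{i}{2}\frac{d}{b}z^2}D_k^M(f)(z)$, and applies the entire-function Lemma \ref{Lm: 4.2} to conclude $h$ is constant --- precisely the ``fallback'' Phragm\'en--Lindel\"of argument you sketch at the end. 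What your reduction buys is brevity: you inherit the growth estimate and the constancy argument for free from the Dunkl case, at the cost of having to match normalizations (your $\lambda'=|b|^{k+1}\lambda$) against whichever statement of the Dunkl--Miyachi theorem you cite; note that the paper itself is loose about the constant in case $(ii)$, absorbing the Gaussian-integral normalization from Property \ref{pr:2.2}$(iv)$ into $C$, so your anticipated obstacle is real but no worse than what the paper tolerates. Two small remarks: in case $(iii)$ your polynomial-times-Gaussian examples require Lemma \ref{l:3.4} (the Dunkl transform of $\psi e^{-\delta|\cdot|^2}$ is $Q e^{-|\cdot|^2/4\delta}$) rather than naive iterated differentiation, since the relevant derivative is the Dunkl operator, not $d/dx$; and for the purposes of the theorem plain Gaussians $e^{-(\delta+\frac{i}{2}\frac{a}{b})x^2}$ already suffice, which is all the paper exhibits.
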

In order to simplify the proof of Miyachi's theorem, we first recall the Lemma \ref{Lm: 4.2} and establish the Lemma \ref{Lm: 4.3}:
\begin{lemma} \label{Lm: 4.2} \cite{chouchene2011miyachi}
    Let $h$ be an entire function on $\mathbb{C}$ such that 
    \begin{eqnarray*}
    |h(z)| \le C e^{t\,{(\mathfrak{Re}z)}^2}\quad\text{and} \quad
\int_{\mathbb{R}} ln^+ |h(x)| dx < \infty,\quad 
\end{eqnarray*} 
for some constants $C>0$ and $ ~t>0$.
Then $h$ is a constant.\label{l:4.1}
\end{lemma}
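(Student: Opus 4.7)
The plan is to show that $h=e^{g}$ for some polynomial $g$ of degree at most $2$, and then to pin down every coefficient of $g$ using both hypotheses. Since $|h(z)|\le Ce^{t(\mathfrak{Re}\,z)^{2}}\le Ce^{t|z|^{2}}$, the function $h$ has order at most $2$. Assuming first that $h$ has no zeros (the case with zeros being handled at the end via Hadamard factorization), I would write $h=e^{g}$ with $g$ entire, so that the growth hypothesis becomes $\mathfrak{Re}\,g(z)\le\log C+t(\mathfrak{Re}\,z)^{2}$. Applying the Borel--Carath\'eodory theorem on disks $|z|\le R$ then yields $|g(z)|=O(1+|z|^{2})$, and Cauchy's estimates on the Taylor coefficients force $g$ to be a polynomial of degree at most $2$; write $g(z)=az^{2}+bz+c$.

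Next I would test the bound $\mathfrak{Re}\,g(z)\le\log C+t(\mathfrak{Re}\,z)^{2}$ at special points. Setting $z=iy$ gives $-(\mathfrak{Re}\,a)y^{2}-(\mathfrak{Im}\,b)y+\mathfrak{Re}\,c\le\log C$ for all $y\in\mathbb{R}$, forcing $\mathfrak{Re}\,a\ge 0$; setting $z=x$ real gives $(\mathfrak{Re}\,a-t)x^{2}+(\mathfrak{Re}\,b)x+\mathfrak{Re}\,c\le\log C$, forcing $\mathfrak{Re}\,a\le t$. The log-integrability hypothesis now reads
\[
\int_{\mathbb{R}}\bigl(\mathfrak{Re}\,a\cdot x^{2}+\mathfrak{Re}\,b\cdot x+\mathfrak{Re}\,c\bigr)^{+}\,dx<\infty,
\]
and this successively forces $\mathfrak{Re}\,a=0$ (otherwise the positive part of the integrand grows quadratically), then $\mathfrak{Re}\,b=0$ (otherwise linearly), and finally $\mathfrak{Re}\,c\le 0$.

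With $\mathfrak{Re}\,a=\mathfrak{Re}\,b=0$, write $a=i\alpha$ and $b=i\beta$ with $\alpha,\beta\in\mathbb{R}$. For $z=x+iy$ the growth bound becomes $-2\alpha xy-\beta y+\mathfrak{Re}\,c\le\log C+tx^{2}$. Completing the square in $x$ (for fixed $y$), the supremum over $x$ of the left-hand side equals $\alpha^{2}y^{2}/t$, so
\[
\frac{\alpha^{2}}{t}\,y^{2}-\beta y+\mathfrak{Re}\,c\le\log C\qquad\text{for every }y\in\mathbb{R}.
\]
This forces $\alpha=0$ (else the left side tends to $+\infty$), and then $\beta=0$. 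Hence $g\equiv c$ is a constant and $h\equiv e^{c}$ is constant.

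The main obstacle lies in handling the case in which $h$ has zeros, since then the representation $h=e^{g}$ is unavailable; simply peeling off a factor $(z-z_{0})$ preserves the growth bound but destroys the $\ln^{+}$-integrability on $\mathbb{R}$ when $z_{0}\notin\mathbb{R}$, because $\ln|x-z_{0}|$ is not integrable. The remedy is to invoke Hadamard's factorization $h(z)=z^{m}e^{Q(z)}\prod_{n}E_{2}(z/z_{n})$ with $Q$ a polynomial of degree at most $2$, and to carry out the coefficient analysis above on $\log|h|$ using the full Weierstrass product simultaneously: the boundedness $|h(iy)|\le C$ on the imaginary axis restricts the angular distribution of the zeros $z_{n}$, the $\ln^{+}$-integrability on $\mathbb{R}$ restricts their radial distribution, and these constraints, together with the analysis of $Q$, force the canonical product to be trivial, reducing the general case to the zero-free one already treated.
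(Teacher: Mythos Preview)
The paper does not prove this lemma; it is quoted from Chouchene--Daher--Kawazoe--Mejjaoli (reference \cite{chouchene2011miyachi}) and used as a black box in the proof of Miyachi's theorem. There is therefore no in-paper argument to compare your approach against.

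On the merits of your direct attempt: the zero-free case is correct and cleanly executed. Your diagnosis of the obstacle in the general case, however, is slightly off, and the sketch you give does not close the gap. Peeling off a single zero $z_{0}$ \emph{does} preserve both hypotheses: for $h_{1}(z)=h(z)/(z-z_{0})$ the growth bound with the \emph{same} exponent $t$ survives (split into $|z-z_{0}|\ge 1$ and $|z-z_{0}|<1$, using the maximum modulus principle on the small disk), and the elementary inequality $\ln^{+}|h_{1}(x)|\le\ln^{+}|h(x)|+\ln^{+}(1/|x-z_{0}|)$ together with the fact that $\ln^{+}(1/|x-z_{0}|)$ is supported on a bounded interval and is bounded there (or has an integrable logarithmic singularity when $z_{0}\in\mathbb{R}$) preserves the $\ln^{+}$-integrability. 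Hence the finitely-many-zeros case reduces by induction to the zero-free one, giving $h=c\prod_{j}(z-z_{j})$, which then violates $\int_{\mathbb{R}}\ln^{+}|h|\,dx<\infty$ unless the product is empty or $c=0$. The genuine difficulty is that an entire function of order at most $2$ can have \emph{infinitely} many zeros, and here iteration is unavailable. Your Hadamard-factorization paragraph only asserts---without mechanism---that boundedness on $i\mathbb{R}$ and $\ln^{+}$-integrability on $\mathbb{R}$ force the canonical product to be trivial; that is precisely where the substance of the lemma lies. In the source you cite and related literature this step is handled not by zero-by-zero bookkeeping but by working directly with the subharmonic function $\log|h|$ via a Phragm\'en--Lindel\"of-type argument that is insensitive to the zero set. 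As written, the general case remains open in your proposal.
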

\begin{lemma} \label{Lm: 4.3}
If $f$ be a measurable function on $\mathbb{R}$ such that $$e^{sx^2}f\in L^p_k(\mathbb{R})+L^{q}_k(\mathbb{R}), \quad 1\le p,q \le \infty,$$
and $s>0$,  then $e^{-\frac{i}{2}\frac{d}{b}(\cdot)^2}D_k^M(f)$ is an entire function on $\mathbb{C}$ satisfying
\begin{equation*}
|e^{-\frac{i}{2}\frac{d}{b}z^2}D^M_k(f)(z)|\le Ce^{\frac{(\mathfrak{Im}z)^2}{4b^2s}}, \quad z\in \mathbb{C}. 
\end{equation*}\label{l:4.2}
\end{lemma}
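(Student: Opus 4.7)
The plan is to exploit the factorization of the LCDT through the classical Dunkl transform recorded in \eqref{eq:4.2}. Setting $\tilde f(x) := e^{\frac{i}{2}\frac{a}{b}x^{2}} f(x)$, that identity reads
\begin{equation*}
e^{-\frac{i}{2}\frac{d}{b} z^{2}}\, D_k^M(f)(z) \;=\; \frac{1}{(ib)^{k+1}}\, D_k(\tilde f)\!\left(\frac{z}{b}\right),
\end{equation*}
so the Gaussian phase on the left is stripped off and the problem reduces to a complex-variable estimate on $D_k(\tilde f)$. The entireness of the left-hand side then follows at once from Note \ref{N:2.4} together with the entireness of $z \mapsto e^{-\frac{i}{2}\frac{d}{b}z^{2}}$, and it only remains to establish the pointwise growth estimate in $\mathfrak{Im} z$.

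For the pointwise bound, I would apply the kernel estimate in Property \ref{pr:2.2}(iii) to obtain $|E_k(-iz/b, x)| \le e^{|\mathfrak{Im} z|\,|x|/|b|}$, and use that $|\tilde f(x)| = |f(x)|$ since $a/b \in \mathbb{R}$. This gives
\begin{equation*}
\bigl|D_k(\tilde f)(z/b)\bigr| \;\le\; \int_{\mathbb{R}} |f(x)|\, e^{|\mathfrak{Im} z|\,|x|/|b|}\, d\mu_k(x).
\end{equation*}
Writing $e^{sx^{2}} f = g_{1} + g_{2}$ with $g_{1} \in L^{p}_k(\mathbb{R})$ and $g_{2} \in L^{q}_k(\mathbb{R})$ per the hypothesis, substituting $f(x) = e^{-sx^{2}}(g_{1}(x)+g_{2}(x))$, and completing the square via
\begin{equation*}
-sx^{2} + \frac{|\mathfrak{Im} z|}{|b|}\,|x| \;=\; -s\bigl(|x| - c\bigr)^{2} + \frac{(\mathfrak{Im} z)^{2}}{4sb^{2}}, \qquad c := \frac{|\mathfrak{Im} z|}{2s|b|},
\end{equation*}
extracts the desired exponential factor $e^{(\mathfrak{Im} z)^{2}/(4sb^{2})}$ out of the integral.

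What remains is to bound $\int |g_{i}(x)|\, e^{-s(|x|-c)^{2}}\, d\mu_k(x)$ by something at worst polynomial in $c$. H\"older's inequality with conjugate exponents $p,p'$ (respectively $q, q'$) reduces this to estimating $\|e^{-s(|x|-c)^{2}}\|_{L^{p_{i}'}_k(\mathbb{R})}$. The change of variables $u = |x| - c$, coupled with the elementary inequality $(|u|+c)^{2k+1} \le C_k\bigl(|u|^{2k+1} + c^{2k+1}\bigr)$ valid for all $k \ge -\tfrac{1}{2}$, yields a bound of the form $C\bigl(1 + c^{(2k+1)/p_{i}'}\bigr)$. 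Putting the estimates together produces the claimed inequality, with the polynomial-in-$c$ prefactor absorbed into the constant $C$.

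The main obstacle lies precisely in this last step: because $d\mu_k$ carries the weight $|x|^{2k+1}$, it is not translation invariant, so the $L^{p'}_k$-norm of a translated Gaussian genuinely grows as $c \to \infty$ and cannot be discarded outright. Without the weight the estimate collapses to a one-line Gaussian computation, but with it one must split the expansion of $(|u|+c)^{2k+1}$ and verify that the resulting polynomial growth does not overwhelm the Gaussian factor in the statement.
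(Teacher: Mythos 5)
Your proposal follows the paper's own route step for step: reduce to the Dunkl transform via \eqref{eq:4.2}, invoke Note \ref{N:2.4} for entireness, split $e^{sx^2}f=f_1+f_2$ with $f_1\in L^p_k$, $f_2\in L^q_k$, bound the kernel by $e^{|\mathfrak{Im}z||x|/|b|}$ using Property \ref{pr:2.2}(iii), complete the square, and finish with H\"older. The one place you go beyond the paper is also where your argument breaks: after extracting $e^{(\mathfrak{Im}z)^2/(4sb^2)}$ you correctly observe that $\|e^{-s(|\cdot|-c)^2}\|_{L^{p'}_k}$ grows like $c^{(2k+1)/p'}$ because $d\mu_k$ is not translation invariant, but you then claim this polynomial-in-$c$ prefactor can be ``absorbed into the constant $C$.'' It cannot: $c=|\mathfrak{Im}z|/(2s|b|)$ depends on $z$, so what you have actually proved is
\begin{equation*}
|e^{-\frac{i}{2}\frac{d}{b}z^2}D_k^M(f)(z)|\le C\bigl(1+|\mathfrak{Im}z|\bigr)^{\frac{2k+1}{p'}}e^{\frac{(\mathfrak{Im}z)^2}{4b^2s}},
\end{equation*}
which is strictly weaker than the stated inequality whenever $p>1$ and $k>-\tfrac12$. (The paper's own proof asserts the bounds \eqref{e:4.1}--\eqref{e:4.2} with a genuine constant after ``H\"older's inequality'' and silently skips the same difficulty; the analogous computation in the Cowling--Price section keeps the factor $(1+|\mathfrak{Im}z|)^{n/p+(2k+1)/p'}$ explicitly, which is the honest outcome.)

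Two standard repairs are available, and you should commit to one. Either (a) weaken the exponent: for every $\varepsilon>0$ one has $(1+c^m)\le C_\varepsilon e^{\varepsilon c^2}$, so the estimate holds with $s$ replaced by any $s'<s$; this suffices for part (i) of Theorem \ref{Th: 4.1}, where $st>\frac{1}{4b^2}$ is strict, but is too lossy at the endpoint $st=\frac{1}{4b^2}$. Or (b) keep the polynomial factor in the statement of the lemma and use a Phragm\'en--Lindel\"of lemma that tolerates polynomial growth in $\mathfrak{Im}z$ (in the spirit of Lemma \ref{l:3.5}), which is how the endpoint case is handled in the Dunkl--Miyachi literature. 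As written, the last sentence of your proposal names the obstacle and then waves it away rather than resolving it.
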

\begin{proof}
Let $e^{sx^2} f\in L_k^p(\mathbb{R})+L_k^{q}(\mathbb{R})$. Then there exist two functions $f_1\in L_k^p(\mathbb{R})$ and $f_2\in L_k^{q}(\mathbb{R})$ such that 
\begin{equation}\label{4.1}
f(x)=e^{-sx^2}(f_1(x)+f_2(x)), \quad \forall x \in \mathbb{R}.   
\end{equation}
On integrating \eqref{4.1} with respect to  $ d\mu_k(x)$ and applying H\"older's inequality we can observe that  $f$ is an integrable function on $\mathbb{R}$. Thus, $D_k^M(f)$ is well defined. By recalling Note \ref{N:2.4}, we see that    $e^ {-\frac{i}{2}\frac{d}{b}(\cdot)^2}D_k^M(f)$ is an entire function. From Property \ref{pr:2.2} (ii), we have
 \begin{equation} \label{e:4.4}
\left|E_k\left(-\frac{iz}{b},x\right)\right| \le e^{\frac{|\mathfrak{Im}z|}{|b|}|x|}.
\end{equation}
Now, invoking  Definition \ref{D:1}, \eqref{4.1} and the fact \eqref{e:4.4} we obtain
\begin{eqnarray} \label{e:3.5}
 \nonumber    |e^{-\frac{i}{2}\frac{d}{b}z^2} D_k^M(f)(z)| 
  \le&\frac{1}{|(ib)^{k+1}|}  \int_{\mathbb{R}} |e^{-sx^2}| | f_1(x)| e^{\frac{|\mathfrak{Im}z|}{|b|}|x|} d\mu_k(x)\\
  &+\frac{1}{|(ib)^{k+1}|}\int_{\mathbb{R}} |e^{-sx^2}| | f_2(x)| e^{\frac{|\mathfrak{Im}z|}{|b|}|x|} d\mu_k(x).
 \end{eqnarray}
Thus, by using H\"older's  inequality we have
 \begin{eqnarray}
 \int_{\mathbb{R}} |e^{-sx^2}| | f_1(x)| e^{\frac{|\mathfrak{Im}z|}{|b|}|x|} d\mu_k(x) &\le& \frac{C}{|b|^{k+1}} e^{\frac{(\mathfrak{Im}z)^2}{4sb^2}} \|f_1\|_{L_k^p(\mathbb{R})}, \label{e:4.1}\\
 \int_{\mathbb{R}} |e^{-sx^2}| | f_2(x)| e^{\frac{|\mathfrak{Im}z|}{|b|}|x|} d\mu_k(x) &\le& \frac{C}{|b|^{k+1}} e^{\frac{(\mathfrak{Im}z)^2}{4sb^2}} \|f_2\|_{L_k^{q}(\mathbb{R})} \label{e:4.2}.
 \end{eqnarray}
Hence by \eqref{e:3.5}, \eqref{e:4.1} and \eqref{e:4.2}, we get the desired result.
\end{proof}

\begin{proof} {\bf of Theorem \ref{Th: 4.1}.}
Let $1\le p,q\le \infty$, and let $e^{sx^2}f\in L^p_k(\mathbb{R})+L^{q}_k(\mathbb{R})$ for $s>0$. We define 
\begin{equation}\label{e:3.9}
h(z) = e^{\frac{z^2}{4sb^2}}\, e^{-\frac{i}{2}\frac{d}{b}z^2}\,D_k^M(f)(z), \,\, z\in \mathbb{C}.
\end{equation}
Then by using Lemma \ref{l:4.2}, we have
 $$|h(z)| \le Ce^{\frac{(\mathfrak{Re}z)^2}{4b^2s}},\,\, z\in \mathbb{C}.$$
\begin{itemize}
        \item [$(i)$.] Now, using the fact  $ln^+(xy)\le ln^+(x)+y$ for $x,y>0$, we deduce that 
 \begin{eqnarray*}
     \int_{\mathbb{R}}ln^+(|h(x)|)\, dx&=& \int_{\mathbb{R}}ln^+\left(|e^{\frac{1}{4sb^2}x^2}\, e^{-\frac{i}{2}\frac{d}{b}x^2}\,D_k^M(f)(x)|\right)\, dx\\
     &\le& \int_{\mathbb{R}} ln^+\left(\frac{|e^{tx^2}D_k^M(f)(x)|}{\lambda}\right)\, dx+\int_{\mathbb{R}} \lambda \,e^{\left(\frac{1}{4sb^2}-t \right)x^2} dx.
 \end{eqnarray*}
 By assuming $st>\frac{1}{4b^2}$,
\begin{equation}\label{eq:3.10}
\int_{\mathbb{R}} \lambda \,e^{\left(\frac{1}{4sb^2}-t \right)x^2} dx < \infty.    
\end{equation}
Recalling assumption  \eqref{4.4} and the fact \eqref{eq:3.10}, we conclude that
\begin{equation*}
 \int_{\mathbb{R}} ln^+(|h(x)|)\, dx < \infty, \quad \text{for}\,\,\,  st>\frac{1}{4b^2}. \end{equation*}

Since the function $h$ satisfies the hypothesis of  Lemma \ref{l:4.1}, $h$ must be a constant. Therefore, from \eqref{e:3.9}, we have

$$D_k^M(f)(z) = Ce^{-\frac{1}{4sb^2}z^2}e^{\frac{i}{2}\frac{d}{b}z^2}.$$ 
The assumptions  $st>\frac{1}{4b^2}$ and \eqref{4.4}, forces the constant $C$ to be zero. As a consequence of one-to-one of  $D_k^M$, we have  $f=0$.
 \item[$(ii).$] Let us consider $st=\frac{1}{4b^2}$. In the previous case, we proved that 
\begin{eqnarray}\label{e:3.10}
D_k^M(f)(x) &=& Ce^{-tx^2}\, e^{-\frac{i}{2}\frac{d}{b}x^2}.
\end{eqnarray} 
  If $|C|\le\lambda$, then from \eqref{e:3.10} we obtain
 \begin{eqnarray*}
\int_{\mathbb{R}}ln^+\left(\frac{|e^{tx^2}\, D_k^M(f)(x)|}{\lambda}\right)\, dx = \int_{\mathbb{R}}ln^+\left(\frac{|C|}{\lambda}\right)\, dx<\infty.
 \end{eqnarray*}
 Using the Property \ref{pr:2.2} (iii), we obtain that 
 \begin{eqnarray*}
     f(x) &=& {D_k^M}^{-1}\left(C\,e^{-\frac{(\cdot)^2}{4sb^2}}\,e^{\frac{i}{2}\frac{d}{b}(\cdot)^2}\right)(x)\\
     &=& \frac{C}{(-ib)^{k+1}}\int_{\mathbb{R}} e^{-\frac{y^2}{4sb^2}}\, e^{-\frac{i}{2}\frac{a}{b}x^2}\, E_k\left(\frac{iy}{b},x\right)\, d\mu_k(y)\\
     &=& C\, e^{-\left(\frac{i}{2}\frac{a}{b}+s\right)x^2}.
\end{eqnarray*}

\item[$(iii).$] We remain to prove the third part. Suppose $st<\frac{1}{4b^2}$. Let us consider $s<r<\frac{1}{4b^2t}$. Clearly, we observe that $$e^{(s-r-\frac{i}{2}\frac{a}{b})x^2} \in L^p_k(\mathbb{R}), \,\, \text{for} \,\,1\le p\le\infty.$$ Define $G_r(x) = e^{-(r+\frac{i}{2}\frac{a}{b})x^2}.$ The function   
$ e^{sx^2}G_r \in L^p_k(\mathbb{R})+L^{q}_k(\mathbb{R})$. By using Property \ref{pr:2.2} (iii), one can immediately obtain the identity $$D_k^M(G_r)(y) = C\, e^{-\frac{y^2}{4b^2r}}\, e^{\frac{i}{2}\frac{d}{b}y^2}.$$  Thus, for $r<\frac{1}{4b^2t}$, we have
\begin{eqnarray*}
\int_{\mathbb{R}} ln^+\left( \frac{|e^{tx^2} D_k^M(G_r)(x)|}{\lambda}\right) dx< \infty.
\end{eqnarray*}
This completes the proof.
 \end{itemize}
\end{proof}
\subsection{Cowling-Price uncertainty principle}
We start this subsection by recalling the essential results from the Dunkl transform. Following that, we will develop the Cowling-Price uncertainty principle within the framework of the LCDT, which is analogous to the Dunkl transform \cite{gallardo2004lp}.

\begin{lemma}\label{l:3.4}\cite{kawazoe2010uncertainty}
Let $\psi$ be a homogeneous polynomial in $\mathbb{R}$. Then there exists a homogeneous polynomial $Q$ with  $deg (Q) = deg(\psi)$ such that 
\begin{equation*}
D_k(\psi\,e^{-\delta\,|\cdot|^2})(x) = Q(x)\, e^{-\frac{|\cdot|^2}{4 \delta}}, \quad \text{ for all} \,\,\,\, \delta >0.    
\end{equation*}
\end{lemma}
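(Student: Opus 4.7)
The plan is to establish the lemma via three ingredients: an explicit formula for the Dunkl transform of a centered Gaussian, the intertwining relation between the Dunkl operator $\Lambda_k$ and the Dunkl transform $D_k$, and an inductive argument showing that iterates of $\Lambda_k$ applied to a Gaussian yield polynomials of the same degree times the same Gaussian. First, I would specialize Property \ref{pr:2.2}(iv) at $y=0$ (using $E_k(x,0)=1$) to obtain the closed form
\begin{equation*}
D_k(e^{-\delta x^2})(y) = \frac{\Gamma(k+1)}{\delta^{k+1}}\, e^{-y^2/(4\delta)}.
\end{equation*}

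Next, from the eigenequation $\Lambda_k E_k(-i\lambda,\cdot)=-i\lambda E_k(-i\lambda,\cdot)$ together with antisymmetry of $\Lambda_k$ under the measure $d\mu_k$ (equivalently, integration by parts with no boundary contribution for the Schwartz-type integrands at hand), I would record the two intertwining identities $D_k(\Lambda_k f)(\lambda)=i\lambda\, D_k(f)(\lambda)$ and $D_k(xf)(\lambda)=i\Lambda_k\, D_k(f)(\lambda)$. Since a homogeneous polynomial on $\mathbb{R}$ of degree $n$ must be of the form $\psi(x)=c_n x^n$, iterating the second identity $n$ times reduces the computation to
\begin{equation*}
D_k(\psi(x)\,e^{-\delta x^2})(y) \;=\; c_n (i\Lambda_k)^n D_k(e^{-\delta x^2})(y) \;=\; \frac{i^n c_n\,\Gamma(k+1)}{\delta^{k+1}}\, \Lambda_k^n e^{-y^2/(4\delta)}.
\end{equation*}
Thus it suffices to verify the claim that $\Lambda_k^n e^{-y^2/(4\delta)} = P_n(y)\,e^{-y^2/(4\delta)}$ for some polynomial $P_n$ of degree exactly $n$; absorbing the prefactor $i^n c_n\Gamma(k+1)/\delta^{k+1}$ into $P_n$ then produces the desired $Q$.

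I would close by induction on $n$, with the trivial base case $n=0$. Assuming $P_n(y)=a_n y^n+(\text{lower order})$ with $a_n\neq 0$, and writing $\Lambda_k = \tfrac{d}{dy}+\tfrac{2k+1}{2y}(1-\sigma)$ with $\sigma f(y)=f(-y)$, a direct computation gives
\begin{equation*}
\Lambda_k\bigl(P_n(y)\,e^{-y^2/(4\delta)}\bigr) = \Bigl[P_n'(y)-\tfrac{y}{2\delta}P_n(y)+\tfrac{2k+1}{2y}\bigl(P_n(y)-P_n(-y)\bigr)\Bigr]e^{-y^2/(4\delta)}.
\end{equation*}
The middle term contributes the leading order $-a_n y^{n+1}/(2\delta)$, while the other two contributions have degree at most $n$; in particular, $\bigl(P_n(y)-P_n(-y)\bigr)/y$ is an honest polynomial of degree at most $n-1$. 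The main point to watch is this last observation, since without it one might worry the reflection part of $\Lambda_k$ either destroys the polynomial structure or interferes with the leading coefficient; however, it is of strictly lower degree than $y^{n+1}$, so no cancellation is possible and $\deg P_{n+1}=n+1$, closing the induction and the proof.
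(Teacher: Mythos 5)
The paper offers no proof of this lemma --- it is quoted from Kawazoe--Mejjaoli \cite{kawazoe2010uncertainty} --- so there is no in-paper argument to compare against; your proposal must be judged on its own. As a derivation of the identity $D_k(\psi e^{-\delta(\cdot)^2})(x)=Q(x)e^{-x^2/(4\delta)}$ with $\deg Q=\deg\psi$, it is correct and complete: specializing Property \ref{pr:2.2}(iv) at $y=0$ (where $E_k(0,\xi)=1$) does give the Gaussian formula; the two intertwining identities follow from the eigenequation for $E_k$, the symmetry $E_k(-i\lambda,x)=E_k(-ix,\lambda)$, and the antisymmetry of $\Lambda_k$ with respect to $d\mu_k$ (the boundary terms vanish for these Gaussian-type integrands); and the induction is sound, since $(P_n(y)-P_n(-y))/y$ is a genuine polynomial of degree at most $n-1$ and the leading coefficient $-a_n/(2\delta)$ of $P_{n+1}$ cannot be cancelled by the lower-order contributions.

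The one caveat is a defect of the statement rather than of your argument: the lemma asserts that $Q$ is \emph{homogeneous}, and your own recursion shows this is false in general. For $\psi(x)=x^2$ the recursion gives, up to the overall constant, $P_2(y)=y^2/(4\delta^2)-(k+1)/\delta$, which has a nonzero constant term; already in the classical case $k=-1/2$ the Fourier transform of $x^n e^{-\delta x^2}$ is a Hermite-type (non-homogeneous) polynomial times the dual Gaussian. Homogeneity of $Q$ would require $\psi$ to be $k$-harmonic in the sense of a Hecke--Bochner identity, which in one dimension forces $\deg\psi\le 1$. What you actually prove --- $Q$ is a polynomial with $\deg Q=\deg\psi$ --- is the correct version of the lemma and is all that the Cowling--Price argument of Section \ref{S:3} uses, so your proof stands once the word ``homogeneous'' is dropped from the conclusion about $Q$.
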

\begin{lemma} \label{l:3.5}\cite{ray2004cowling}
Let $g$ be an entire function on $\mathbb{C}$ satisfying 
\begin{equation*}
|g(z)| \le C\, e^{s\,|Re\, z|^2}\,(1+|Im\, z|)^l  \text{ for some $l>0, s>0$},  
\end{equation*}
and
\begin{equation*}
\int_{\mathbb{R}}  \frac{|g(x)|^q}{(1+|x|)^m}\, |Q(x)|\, dx < \infty,  
\end{equation*}
for some $q\ge 1, \, m>1$ and a polynomial $Q$ of degree $M$. Then
$g$ is a polynomial with $deg (g) \le min \{l, \frac{m-M-1}{q}\}$ and if  $m<q+M+1$, then $g$ is a constant.
\end{lemma}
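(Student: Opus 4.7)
The plan is to exploit the interplay between the growth hypothesis in $\mathbb{C}$ (order at most $2$, with polynomial correction in the imaginary direction) and the integral hypothesis on $\mathbb{R}$ (polynomial-type decay on the real axis) via a Phragm\'en-Lindel\"of argument, concluding that $g$ must be a polynomial, and then reading off the degree bound from each hypothesis separately.

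The first step is to introduce $F(z) := g(z)\,e^{-sz^2}$. Writing $z = x+iy$, one has $|e^{-sz^2}| = e^{-s(x^2-y^2)}$, so the hypothesis transforms to $|F(z)| \leq C(1+|y|)^l\,e^{sy^2}$, i.e.\ the Gaussian growth is transferred into the imaginary direction while $F$ becomes of zeroth-order growth along horizontal lines. The second step is to extract from the integral hypothesis a pointwise control on $|g(x)|$ for large real $x$: a standard subharmonicity / mean-value argument applied to $\log|g|$ on disks of bounded radius centred at $x$ converts the weighted $L^q$-integrability into an estimate of the form $|g(x)| \leq C(1+|x|)^{(m-M-1)/q + \varepsilon}$ outside a set of finite measure. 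The third step is to apply Phragm\'en-Lindel\"of to $F$ in each of the half-planes $\{\mathfrak{Im}\,z>0\}$ and $\{\mathfrak{Im}\,z<0\}$; since the growth in $y$ is of critical order $2$, one first regularises by looking at $F(z) e^{-\varepsilon z^2}$, applies the standard half-plane principle, and lets $\varepsilon\to 0^+$. This forces the growth of $F$ throughout $\mathbb{C}$ to be controlled by its growth on $\mathbb{R}$, whence $g(z) = F(z) e^{sz^2}$ is of at most polynomial growth, and is therefore a polynomial by Liouville's theorem.

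For the degree bounds, restricting the hypothesis to $z = iy$ yields $|g(iy)| \leq C(1+|y|)^l$, forcing $\deg g \leq l$; along $\mathbb{R}$ a polynomial of degree $d$ has leading behaviour $|g(x)|^q |Q(x)|(1+|x|)^{-m} \sim |x|^{qd+M-m}$, so integrability at infinity requires $qd + M - m < -1$, i.e.\ $\deg g \leq (m-M-1)/q$. Taking the smaller of the two gives the stated bound, and when $m < q+M+1$ this bound is strictly less than $1$, forcing $g$ to be constant. The main obstacle is the borderline Phragm\'en-Lindel\"of step: the prescribed growth has order exactly $2$ in the imaginary direction, so the standard half-plane principle does not apply directly; the regularisation $e^{-\varepsilon z^2}$, together with the careful extraction of a pointwise real-axis bound from the weighted $L^q$-integrability, is the technical heart of the argument.
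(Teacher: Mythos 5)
The paper offers no proof of this lemma at all --- it is imported verbatim from the cited reference \cite{ray2004cowling} --- so your attempt can only be judged on its own merits, and unfortunately its central step is not salvageable as written. The fatal problem is Step 3. You claim that a Phragm\'en--Lindel\"of argument in the upper and lower half-planes forces the growth of $F(z)=g(z)e^{-sz^2}$ throughout $\mathbb{C}$ to be controlled by its growth on $\mathbb{R}$. This conclusion is simply false for legitimate instances of the lemma: take $g\equiv 1$ (which satisfies both hypotheses for suitable $l,m$ and is an admissible output of the lemma). Then $F(z)=e^{-sz^2}$ is bounded by $1$ on $\mathbb{R}$ but satisfies $|F(iy)|=e^{sy^2}\to\infty$, so no maximum-principle argument can transfer the real-axis bound to the half-plane. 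The reason the argument cannot work is that your own Step 1 shows $|F(z)|\le C(1+|y|)^l e^{sy^2}$: this is growth of order $2$ in a half-plane, whereas the critical order for Phragm\'en--Lindel\"of in a half-plane (opening angle $\pi$) is $1$, not $2$; order-$2$ growth is far beyond what any version of the principle tolerates there. Moreover the proposed regulariser $e^{-\varepsilon z^2}$ has modulus $e^{-\varepsilon x^2+\varepsilon y^2}$, which \emph{amplifies} the vertical growth rather than damping it. Had the argument worked it would prove far too much: $F$ bounded on $\mathbb{C}$ gives $F$ constant, i.e.\ $g(z)=ce^{sz^2}$, which is incompatible with the lemma's conclusion that nonconstant polynomials $g$ can occur.

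There is a secondary gap in Step 2: the sub-mean-value inequality for $\log|g|$ (or $|g|^q$) averages over two-dimensional disks, on which the only available bound off the real axis is the Gaussian one $e^{s(\mathfrak{Re}\,z)^2}$, so it cannot convert the one-dimensional weighted $L^q$ condition into a pointwise polynomial bound; what Chebyshev actually yields is a polynomial bound only outside an exceptional set, and a boundary bound with exceptional sets cannot feed into a maximum-principle argument without substantial extra work. The correct route (the one in the cited source) runs the Phragm\'en--Lindel\"of argument in the four \emph{quadrants}, where the opening angle $\pi/2$ makes order $2$ exactly critical and the bound $e^{s(\mathfrak{Re}\,z)^2}\le e^{s|z|^2}$ is admissible as an interior estimate, using the polynomial bound $|g(iy)|\le C(1+|y|)^l$ on the imaginary boundary rays and the (carefully extracted) real-axis control on the other rays. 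Your final degree computations --- $\deg g\le l$ from the imaginary axis and $\deg g\le (m-M-1)/q$ from integrability, hence constancy when $m<q+M+1$ --- are correct, but only once polynomiality of $g$ has actually been established.
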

\begin{theorem}
Let $f$ be a measurable function on $\mathbb{R}$  such that
\begin{equation} \label{e:3.11}
\int_{\mathbb{R}} \frac{e^{spx^2}\,|f(x)|^p}{(1+|x|)^n}\, d\mu_k(x) <\infty 
\end{equation}
and 
\begin{equation} \label{e:3.12}
\int_{\mathbb{R}} \frac{e^{tq\lambda^2} |D_k^M(f)(\lambda)|^q}{(1+|\lambda|)^m}\, d\lambda <\infty,
\end{equation}
where $s,t>0$ and $n>0, m>1, 1\le p,q <\infty$. Then the following results hold.
\begin{itemize}
    \item [$(i)$] If $st>\frac{1}{4b^2}$, then $f=0$ a.e on $\mathbb{R}$.
    \item[$(ii)$] If $st= \frac{1}{4b^2}$, then  $f(x) = Q(x)\, e^{-(s+\frac{i}{2}\frac{a}{b})x^2}$, where $Q$ is a polynomial with $deg(Q) \le min \{\frac{n}{p}+ \frac{2k}{p'}, \frac{m-1}{q}\}$ and $p'$ is the conjugate exponent of $p$. Moreover, if  $1<m \le 1+q$ and $n>2k+1$, then $f(x)= C\,  e^{-(s+\frac{i}{2}\frac{a}{b})x^2}$. 
\item[$(iii)$] If $st<\frac{1}{4b^2}$, then there are many functions of the form $f(x) = Q(x)\,e^{-(\delta
+\frac{i}{2}\frac{a}{b})x^2}$ which satisfy the hypothesis, where $\delta \in (t, \frac{1}{4sb^2})$ and $Q$ is the polynomial in $\mathbb{R}$. 
\end{itemize}
\end{theorem}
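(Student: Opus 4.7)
The plan is to analyze the entire function $g(z) = e^{-\frac{i}{2}\frac{d}{b}z^2}\, D_k^M(f)(z)$ (entire by Note \ref{N:2.4}) and then apply Lemma \ref{l:3.5} to a suitable exponential modification of it. Using the integral representation of $D_k^M(f)$, the kernel bound $|E_k(-iz/b,x)| \le e^{|\mathfrak{Im}\,z||x|/|b|}$ from Property \ref{pr:2.2}(iii), and H\"older's inequality against the weight $(1+|x|)^{-n/p}$, hypothesis \eqref{e:3.11} yields
$$|g(z)| \le C\, e^{\frac{(\mathfrak{Im}\,z)^2}{4sb^2}}\,(1+|\mathfrak{Im}\,z|)^{\alpha}, \qquad \alpha = \tfrac{n}{p} + \tfrac{2k}{p'},$$
the polynomial factor arising from estimating $\bigl(\int e^{-p's(x-c)^2}(1+|x|)^{np'/p}|x|^{2k+1}\,dx\bigr)^{1/p'}$ at $c = |\mathfrak{Im}\,z|/(2sb)$. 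Setting $h(z) = e^{z^2/(4sb^2)}\,g(z)$ produces an entire function whose growth matches the hypothesis of Lemma \ref{l:3.5} with rate $1/(4sb^2)$ and polynomial degree $\alpha$.

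For the real-line integrability required by that lemma, compute $|h(x)|^q = e^{qx^2/(4sb^2)}|D_k^M(f)(x)|^q$. When $st \ge 1/(4b^2)$, this is bounded above by $e^{tqx^2}|D_k^M(f)(x)|^q$, so hypothesis \eqref{e:3.12} gives $\int |h(x)|^q(1+|x|)^{-m}\,dx < \infty$. Lemma \ref{l:3.5} (with the auxiliary polynomial taken constant) forces $h$ to be a polynomial $Q$ with $\deg Q \le \min\{\alpha,(m-1)/q\}$. In case (i), when $st > 1/(4b^2)$, substituting $D_k^M(f)(\lambda) = Q(\lambda)\,e^{-\lambda^2/(4sb^2)}\,e^{id\lambda^2/(2b)}$ back into \eqref{e:3.12} produces an integrand containing the factor $e^{q\lambda^2(t-1/(4sb^2))}$ with positive exponent, forcing $Q \equiv 0$ and hence $f = 0$ by injectivity of $D_k^M$. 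In case (ii), when $st = 1/(4b^2)$, the same polynomial $Q$ is obtained; one inverts via $D_k^{M^{-1}}$, writes $Q$ as a sum of homogeneous parts, and uses Lemma \ref{l:3.4} together with Property \ref{pr:2.2}(iv) to compute the inverse transform of $Q\,e^{-(\cdot)^2/(4sb^2)}\,e^{id(\cdot)^2/(2b)}$; the two phase factors in $d/b$ cancel and one obtains $f(x) = R(x)\,e^{-(s+\frac{i}{2}\frac{a}{b})x^2}$ with $\deg R = \deg Q$. The ``moreover'' sharpening follows from the constancy clause of Lemma \ref{l:3.5} when $m \le 1+q$, the side condition $n > 2k+1$ ensuring that the resulting pure Gaussian still satisfies \eqref{e:3.11} against the Dunkl measure.

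Case (iii) is handled by explicit construction: for $\delta$ chosen so that simultaneously $\delta > s$ (making $\int e^{(s-\delta)px^2}(1+|x|)^{-n}d\mu_k$ finite) and $\delta < 1/(4tb^2)$ (making $\int e^{q(t-1/(4\delta b^2))\lambda^2}(1+|\lambda|)^{-m}d\lambda$ finite), a range that is nonempty precisely when $st < 1/(4b^2)$, and any polynomial $R$, the function $f(x) = R(x)\,e^{-(\delta+\frac{i}{2}\frac{a}{b})x^2}$ satisfies $\tilde f(x) = R(x)\,e^{-\delta x^2}$. Then \eqref{eq:4.2} and Lemma \ref{l:3.4} give $D_k^M(f)(\lambda) = C\,\tilde R(\lambda/b)\,e^{-\lambda^2/(4\delta b^2)}\,e^{id\lambda^2/(2b)}$, and both hypotheses reduce to convergent Gaussian-times-polynomial integrals.

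The principal obstacle will be the sharp derivation of the polynomial growth rate $\alpha = n/p + 2k/p'$: the H\"older split must be arranged to isolate the exponential factor $e^{(\mathfrak{Im}\,z)^2/(4sb^2)}$ while absorbing the Dunkl weight $|x|^{2k+1}$ and the polynomial weight $(1+|x|)^n$ into the polynomial-in-$|\mathfrak{Im}\,z|$ remainder, and this exponent is precisely what dictates the degree bound in part (ii). Once this growth estimate is in place, the remaining work is a careful bookkeeping application of Lemmas \ref{l:3.5} and \ref{l:3.4}, paralleling the Cowling–Price argument for the Dunkl transform.
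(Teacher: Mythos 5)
Your proposal follows essentially the same route as the paper's proof: the growth estimate $|e^{-\frac{i}{2}\frac{d}{b}z^2}D_k^M(f)(z)|\le C\,e^{(\mathfrak{Im}z)^2/(4sb^2)}(1+|\mathfrak{Im}z|)^{\alpha}$ obtained from the kernel bound and H\"older's inequality, the auxiliary entire function $e^{z^2/(4sb^2)}e^{-\frac{i}{2}\frac{d}{b}z^2}D_k^M(f)(z)$, Lemma \ref{l:3.5} to force it to be a polynomial, and Lemma \ref{l:3.4} together with the relation to the Dunkl transform to invert. The only deviations are harmless: you settle case $(i)$ by substituting the polynomial-times-Gaussian form back into the second hypothesis rather than the paper's reduction to case $(ii)$ via intermediate parameters $s',t'$, and your interval $(s,\tfrac{1}{4tb^2})$ in case $(iii)$ is in fact the one that makes both hypotheses converge (consistent with the Miyachi construction), whereas the paper writes $(t,\tfrac{1}{4sb^2})$.
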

\begin{proof}
We observe from \eqref{e:3.11}, $f$ is an  integrable function on $\mathbb{R}
$. Thus, $D_k^M(f)$ is well defined on $\mathbb{R}$. Note \ref{N:2.4} implies that  $D_k^M(f)$ is also an entire function on $\mathbb{C}$. 
Now, from \eqref{eq:4.2} and utilizing  H\"older's inequality along with Property \ref{pr:2.2} (ii), we have
\begin{eqnarray*}
|e^{-\frac{i}{2}\frac{d}{b}z^2}\,D_k^M(f)(z)| &\le& \frac{1}{|b|^{k+1}}\, \int_{\mathbb{R}} |f(x)|\, \left|E_k\left(-\frac{iz}{b},x\right)\right|\, d\mu_k(x)\\
&\le& \frac{1}{|b|^{k+1}}\, \int_{\mathbb{R}} |f(x)|\, e^{|\frac{Im z}{b}|\,|x|}\,d\mu_k(x)\\
&=& \frac{e^{\frac{|Im\,z|^2}{4sb^2}}}{|b|^{k+1}}\, \int_{\mathbb{R}} \frac{e^{s\,|x|^2}\,|f(x)|}{(1+|x|)^{\frac{n}{p}}}\, (1+|x|)^{\frac{n}{p}}\, e^{-s\,\left(|x|-\frac{Im\, z}{ 2sb}\right)^2}\,d\mu_k(x)\\
&\le& \frac{ C\,e^{\frac{|Im\,z|^2}{4sb^2}}}{|b|^{k+1}}\, \left(\int_{\mathbb{R}} (1+|x|)^{\frac{np'}{p}}\, e^{-sp'\,\left(|x|- \frac{Im\, z}{ 2sb}\right)^2}\,d\mu_k(x)\right)^{\frac{1}{p'}}\\ 
&\le& \frac{ C\,e^{\frac{|Im\,z|^2}{4sb^2}}}{|b|^{k+1}}\,\left(1+|Im\,z| \right)^{\frac{n}{p}+\frac{2k+1}{p'}}.
\end{eqnarray*} 
Now, we define the entire function 
\begin{equation} \label{e:3.13}
g(z) = e^{\frac{z^2}{4sb^2}}\, e^{-\frac{i}{2}\frac{d}{b}z^2}\, D_k^M(f)(z).
\end{equation} 
 Indeed, $g$ is pointwise bounded 
\begin{equation*}
|g(z)| \le   C\,e^{\frac{|Re\,z|^2}{4sb^2}}\,\left(1+|Im\,z| \right)^{\frac{n}{p}+\frac{2k+1}{p'}}.
\end{equation*}
Further, if $st= \frac{1}{4b^2}$, then assumption \eqref{e:3.12}, gives 
\begin{equation*}
    \int_{\mathbb{R}} \frac{ |g(x)|^q}{(1+|x|)^m}\,dx <\infty.  
\end{equation*}
From Lemma \ref{l:3.5}, we get $g$ is the polynomial  say $P(x)$ with $deg (P) \le r $, where $r=min \{\frac{n}{p}+\frac{2k+1}{p'}, \frac{m-1}{q}\}$. Thus, we deduce \eqref{e:3.13} 
\begin{eqnarray*}
D_k^M(f)(x) &=& P(x)\,e^{-\frac{x^2}{4sb^2}}\, e^{\frac{i}{2}\frac{d}{b}x^2} \\ 
&=&P(x)\,e^{-tx^2}\, e^{\frac{i}{2}\frac{d}{b}x^2}. 
\end{eqnarray*}
 By using relation  \eqref{eq:4.2} and Lemma \ref{l:3.4}, we obtain 
$f(x) = C\,Q(x)\, e^{-(s+\frac{i}{2}\frac{a}{b})x^2},$ 
where $Q$ is a polynomial with $deg(Q)=deg(P)$. Therefore, $f$ satisfies hypothesis \eqref{e:3.11}, whenever $n>2k+1+pr$. Consider  $m\le 1+q$. From Lemma \ref{l:3.5}, it follows that $g$ is a constant. Consequently, the equation \eqref{e:3.13} implies $$D_k^M(f)(x) = C\,e^{-\frac{x^2}{4sb^2}}\, e^{\frac{i}{2}\frac{d}{b}x^2}$$ and $f(x)= C\, e^{-(s+\frac{i}{2}\frac{a}{b})x^2}$. The choice of $n>2k+1$ and $m>1$, ensures that the hypothesis \eqref{e:3.11} and \eqref{e:3.12} satisfied, respectively. This completes the proof of $(ii)$.\\\\ $(i).$ If $st>\frac{1}{4b^2}$, then we choose $s', t'$ such that $s> s'=\frac{1}{4b^2t'}>\frac{1}{4b^2t}$. For $s'=\frac{1}{4b^2t'}$, (ii) implies that $f$ and $D_k^M(f)$  satisfy the hypothesis and $D_k^M(f)(x)= P(x)\,e^{-t'x^2}\, e^{\frac{i}{2}\frac{d}{b}x^2}$. But $D_k^M(f)$ cannot satisfy \eqref{e:3.11} unless $P(x)=0$. This follows that $f=0 \quad a.e$ on $\mathbb{R}$. \\
$(iii).$ If $st< \frac{1}{4b^2}$, repeating the same argument as in the previous case, we obtain $f(x) =  C\,Q(x)\, e^{-(\delta+\frac{i}{2}\frac{a}{b})x^2}$, for all $\delta \in (t, \frac{1}{4sb^2})$, which satisfies  \eqref{e:3.10} and \eqref{e:3.11}. This completes the proof.
\end{proof}
\subsection*{Acknowledgements:} The second author acknowledges the funding received from DST-SERB (SUR/2022/005678).
\subsection*{Data availability:} No new data was collected or generated during the course of this research.
\subsection*{Disclosure statement:} The authors report there are no competing interests to declare.  

\subsection*{Conflict of interest:} No potential conflict of interest was reported by the author.

\end{document}